\numberwithin{equation}{section}
\newtheorem{theorem}{Theorem}[section]
\newtheorem{lemma}[theorem]{Lemma}
\newenvironment{proof}[1][Proof]{\noindent\textbf{#1.} }{\hfill $\Box$}
\allowdisplaybreaks \numberwithin{equation}{section}
\makeatletter\setlength{\textwidth}{15.0cm}
\begin{document}

\author{\small Yuan Li
	\footnote{{E-mail addresses}: yli2021@ccnu.edu.cn (Y. Li)}
	\setcounter{footnote}{0}
	\\\small  School of Mathematics and Statistics, Central China Normal University, Wuhan 430079,  China}
\date{}
\title{\textbf{Blowup dynamics for inhomogeneous mass critical half-wave  equation }}
\date{  }
\maketitle

\begin{abstract}
We consider the focusing inhomogeneous  mass critical half-wave equation  in one dimension. Under the mild conditions of the inhomogeneous factor, we  show that  the existence of the radial blowup solutions with ground state mass $\|u_0\|_2=\|Q\|_2$, where $Q$ is the unique positive ground state solution of equation $DQ+Q=Q^3$ and obtain the blowup rate $\|D^{\frac{1}{2}}u(t)\|_{L^2}\sim\frac{1}{|t|}$ as $t\nearrow 0^-$.

\noindent \textbf{Keywords:} Half-wave equation; Inhomogeneous; Ground state mass;  Blowup

\end{abstract}

\section{Introduction and the main result}\label{section:1}
\subsection{Introduction}
We deal in this paper with a one-dimensional focusing mass critical half-wave equation with an inhomogeneous nonlineraity
\begin{equation}\label{equ-1-hf-in}
\begin{cases}
	i\partial_tu=Du-k(x)|u|^2u,\,\,(t,x)\in\mathbb{R}\times\mathbb{R},\\
	u(t_0,x)=u_0(x),\ u_0:\mathbb{R}\rightarrow\mathbb{C}.
\end{cases}
\end{equation}
Here $\widehat{(Df)}(\xi)=|\xi|\hat{f}(\xi)$ denotes the first-order nonlocal fractional derivative and for some smooth bounded inhomogeneity $k:\mathbb{R}\rightarrow\mathbb{R}_+^*$ and some real number $t_0<0$. This kind of problem arises naturally in
turbulence phenomena, wave propagation, continuum limits of lattice systems, and models for gravitational collapse in astrophysics \cite{majda1997,majda2001,Weinstein1987,Ionescu2014,Frank-lenzmann2013,eckhaus1983,klein2014,k-lenzmann2013}. From mathematical point of view, it is a canonical model to break the group of symmetries of the $k\equiv1$ homogeneous case.
	
% From standard Cauchy theory and a variational argument, there exists a critical number $M_k>0$ such that for $H^{1/2}$ initial data with $$\|u_0\|_{L^2}<M_k,$$ the unique $H^{1/2}$ solution to \eqref{equ-1-hf-in} is global in time and bounded in $H^{1/2}$, while on the contrary finite time blow up may occur for $\|u_0\|>M_k$. We address in this paper the question of the dynamics at threshold
% \begin{align*}
% 	\|u\|_{L^2}=M_k.
% \end{align*}

Let us start with recalling some well-known facts in the homogeneous case $k\equiv1$:
\begin{equation}\label{equ-1-hf}
	\begin{cases}
		i\partial_tu=Du-|u|^2u,\,\,(t,x)\in\mathbb{R}\times\mathbb{R},\\
		u(t_0,x)=u_0(x),\ u_0:\mathbb{R}\rightarrow\mathbb{C},
	\end{cases}
\end{equation}
where $t_0<0$. All $H^{1/2}$ solutions must  satisfy the conservation laws of  $L^2$-norm, energy and momentum
\begin{align*}
	\text{$L^2$-norm}\ \ M(u)&=\int_{\mathbb{R}}|u(t,x)|^2dx=M(u_0);	\\
	\text{Energy}\ \ E(\psi)&=\frac{1}{2}\int_{\mathbb{R}}\bar{u}(t,x)Du(t,x)dx-\frac{1}{4}\int_{\mathbb{R}}|u(t,x)|^{4}dx=E(u_0);\\
	\text{Momentum}\ \
	P(u)&=\Im \left(\int\nabla u(t,x)\bar{u}(t,x)dx\right)=P(u_0),
\end{align*}
and a group of $H^{1/2}$ symmetries leaves the flow invariant: if $u(t,x)$ solves \eqref{equ-1-hf}, then for any $(\lambda_0,\tau_0,x_0,\gamma_0)\in\mathbb{R}_+^*\times\mathbb{R}\times\mathbb{R}\times\mathbb{R}$, so does
\begin{align}\label{symmetry-in}
	v(t,x)=\lambda_0^{\frac{1}{2}}u(\lambda_0t+\tau_0,\lambda_0 x+x_0)e^{i\gamma_0}.
\end{align}
Following from  \cite{KLR2013}, let $Q$ be the unique  positive radial ground state solution to
\begin{equation}\label{equ-elliptic-in}
	DQ+Q-Q^3=0.
\end{equation}
Then the variational characterization of $Q$ ensures  that initial data $u_0\in H^{1/2}(\mathbb{R})$ with $\|u_0\|_{L^2}<\|Q\|_{L^2}$ yield global and bounded solutions $T=+\infty$, see \cite{KLR2013}. On the other hand, finite time blowup may occur for data $\|u_0\|_{L^2}\geq\|Q\|_{L^2}$.
At the critical mass threshold, the symmetries \eqref{symmetry-in} yields a minimal mass blow-up solution (see \cite{KLR2013})
\begin{align*}
	u(t,x)\sim\frac{1}{|t|}Q\left(\frac{x-\alpha}{t^2}\right)e^{i\gamma(t)},\,\,\|u(t)\|_{L^2}=\|Q\|_{L^2},
\end{align*}
which blows up at $t=0$ and the blow up speed is
\begin{align*}
	\|D^{\frac{1}{2}}u(t)\|_{L^2}\sim \frac{C(u_0)}{|t|}\,\,\text{as}\,\,t\rightarrow 0^{-}.
\end{align*}
Recently, Georgiev and Li (the author of this paper) \cite{Georgiev-Li-2022-CPDE,Georgiev-Li-2021-JFA} also obtained the ground state blowup solution to the mass critical half-wave equation in two and three dimensional cases. But unlike the mass critical NLS \cite{Merle-1993-Duke,Raphael2011-Jams}, the uniqueness for this minimal (ground state) mass blow-up
solution is still not known.	

The structure of the problem is similar to the mass critical nonlinear Schr\"odinger equation
\begin{eqnarray}\label{equ-classical}
iu_t+\Delta u+|u|^{\frac{4}{N}}u=0.
\end{eqnarray}
At the mass critical threshold, the pseudo-conformal symmetry yields a minimal mass blowup solution
\begin{align*}
    S(t,x)=\frac{1}{|t|^{N/2}}Q\left(\frac{x}{t}\right)e^{i\frac{|x|^2}{4t}-\frac{i}{t}},\,\,\,\|S(t)\|_{L^2}=\|Q\|_{L^2},
\end{align*}
where $Q$ is the unique positive ground state of problem \eqref{equ-classical}, which blows up at $t=0$. Merle \cite{Merle-1993-Duke} proved the uniqueness of the critical mass blowup solution $u\in H^1$ with $\|u_0\|_{L^2}=\|Q\|_{L^2}$ and blowing up at $t=0$ is equal to $S(t)$ up to the symmetries of the flow. A robust dynamical approach for the proof of both existence and uniqueness has been developed by Rapha\"el and Szeftel \cite{Raphael2011-Jams} for the inhomogeneous problem
\begin{eqnarray}\label{equ-classical-non}
iu_t+\Delta u+k(x)|u|^{\frac{4}{N}}u=0.
\end{eqnarray}
in two dimensions. Also,  Banica,  Carles and  Duyckaerts \cite{Banica-2011-CPDE} obtained the existence of the minimal blowup solution of problem \eqref{equ-classical-non} in the one and two dimensional cases.

Inspired by \cite{Banica-2011-CPDE,Raphael2011-Jams,KLR2013}, in this paper we study the inhomogeneous $L^2$-critical half-wave equation \eqref{equ-1-hf-in}. By a similar argument as the homogeneous half wave equation \cite{KLR2013} and the cubic Szeg\H{o} equation \cite{Gerard-2010}, the Cauchy problem \eqref{equ-1-hf-in} is still locally wellposed in the Sobolev space $H^{s}(\mathbb{R})$, $s>\frac{1}{2}$ and local existence in the energy space $H^{1/2}(\mathbb{R})$, that is, there exists a unique solution $u\in C^0([t_0,T),H^s(\mathbb{R}))$, where $s\geq \frac{1}{2}$. with its maximal time of existence $t_0<T\leq+\infty$, and
\begin{align}
    T<+\infty\,\,\text{implies}\,\,\lim_{t\to T^-}\|u(t)\|_{H^{1/2}}=+\infty.
\end{align}
The conservation of the momentum no longer holds but the $L^2$-norm and energy are still conserved:
\begin{align*}
	&\text{$L^2$-norm:}\,\,\int_{\mathbb{R}}|u(t,x)|^2=\int_{\mathbb{R}}|u_0(x)|^2,\\
	&\text{Energy:}\,\,E(u(t,x))=\frac{1}{2}\int_{\mathbb{R}}\bar{u}(t,x)Du(t,x)dx
	-\frac{1}{4}\int_{\mathbb{R}}k(x)|u(t,x)|^{4}dx=E(u_0).
\end{align*}
For the higher dimensional case, Bellazzini,  Georgiev and Visciglia \cite{BGV2018} obtained the  local well-posedness in  $H^1_{rad}(\mathbb{R}^N)$, $N\geq2$; Hidano and Wang \cite{Hidano-2019-sel} studied the local well-posedness in  $H^{\frac{1}{2}+\epsilon}_{rad}(\mathbb{R}^N)$, $N\geq2$ and $\epsilon>0$ is small.
The  canonical effect of the inhomogenity is to completely destory the group of symmetry \eqref{symmetry-in} and in the sense \eqref{equ-1-hf-in} is a model to analyze the properties of half-wave equation in the absence of symmetries.

From the standard variational techniques, we can obtain the criterion of global existence for \eqref{equ-1-hf-in}: given $\kappa>0$, let $Q_{\kappa}$ be
\begin{align}\label{intro-1-in}
	Q_{\kappa}(x)=\frac{1}{\kappa^{1/2}}Q(x),
\end{align}
and let
\begin{align*}
	\kappa=\max_{x\in\mathbb{R}}k(x)<\infty.
\end{align*}
Then the initial data with
\begin{align}\label{intro-2-in}
	\|u_0\|_{L^2}<M_k=\|Q_{\kappa}\|_{L^2}
\end{align}
yield global and $H^{1/2}$ bounded solutions while finite time blow-up may occur for $\|u_0\|_{L^2}\geq M_k$.
	
\subsection{Statement of the result}

Let us now fix our assumptions on $k$ where we normalize without loss of generality the supremum $k_2=1$:
\begin{align}\label{assumption-1}
	0<k_1\leq k(x)\leq1\,\,\text{and}\,\,\max_{x\in\mathbb{R}}k(x)=1\,\,\text{is attained}.
\end{align}
From \eqref{intro-1-in}, \eqref{intro-2-in}, the critical mass is then $$M_k=\|Q\|_{L^2}.$$
The following theorem is the main result of this paper.
\begin{theorem}(Existence of Ground state mass blowup elements)\label{Theorem-1-hf-2}
Assume that the function $k\in C^2(\mathbb{R})$ is even and satisfies \eqref{assumption-1}. For  $E_0\in \mathbb{R}_+^*$, there exist $t^*<0$ independent of $E_0$ and a ground state mass solution $u\in C^0([t^*,0);H^{1/2}(\mathbb{R}))$ of equation \eqref{equ-1-hf-in} with
\begin{align}\notag
\|u\|_2=\|Q\|_2,\ E(u)=E_0,
\end{align}
which blows up at time $T=0$. More precisely, it holds that
\begin{align}
u(t,x)-\frac{1}{\lambda(t)^{1/2}}Q\left(\frac{x}{\lambda(t)}\right)
	e^{i\gamma(t)}\rightarrow0\ \text{in}\ L^2(\mathbb{R})\ \text{as}\ t\rightarrow0^-,
\end{align}
where
\begin{align}\notag
\lambda(t)=\lambda^*t^2+\mathcal{O}(t^5),\,\, \gamma(t)=\frac{1}{\lambda^*|t|}+\mathcal{O}(t),
\end{align}
with some constant $\lambda^*>0$, and the blowup speed is given by:
\begin{align}
\|D^{\frac{1}{2}}u(t)\|_2\sim\frac{C(u_0)}{|t|}\ \text{as}\ t\rightarrow0^{-},
\end{align}
where $C(u_0)>0$ is a constant  depending only  on the initial data $u_0$.
\end{theorem}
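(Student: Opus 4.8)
The plan is to follow the modulated-profile/energy-method strategy developed for the homogeneous equation in \cite{KLR2013} and for the inhomogeneous mass-critical NLS in \cite{Raphael2011-Jams,Banica-2011-CPDE}, adapting it to the nonlocal operator $D$ and to the symmetry-breaking factor $k$. First I would pass to self-similar variables: introduce the rescaled time $s$ by $\frac{ds}{dt}=\frac{1}{\lambda(t)}$ and write $u(t,x)=\lambda^{-1/2}v(s,y)e^{i\gamma}$ with $y=x/\lambda$, so that $b:=-\lambda_s/\lambda$ becomes the variable conjugate to dilations. I would then construct a slowly modulated approximate profile $Q_b=Q+bT_1+b^2T_2+\cdots$ solving the self-similar equation $DQ_b+Q_b-|Q_b|^2Q_b+ib\Lambda Q_b=0$ up to a controlled error, where $\Lambda=\tfrac12+y\partial_y$ is the scaling generator. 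Each correction $T_j$ is obtained by inverting the linearized operators $L_+=D+1-3Q^2$ and $L_-=D+1-Q^2$ around $Q$; the solvability of the first step requires $\Lambda Q\perp\ker L_-=\mathrm{span}\{Q\}$, which is exactly the critical-mass identity $\int \Lambda Q\,Q=\tfrac12\frac{d}{d\lambda}\|Q_\lambda\|_2^2|_{\lambda=1}=0$ that makes the whole hierarchy close.

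Next I would bring in the inhomogeneity. Since the solution is even and concentrates at the origin, which by \eqref{assumption-1} and evenness is a maximum with $k(0)=1$, $k'(0)=0$, $k''(0)\le0$, the Taylor expansion $k(\lambda y)=1+\tfrac12k''(0)\lambda^2y^2+\cdots$ shows that $k$ first perturbs the energy at order $\lambda$. Using $E(Q)=0$ (Pohozaev at critical mass) and evaluating $E(u)$ on the modulated profile, the $b^2$-term coming from $Q_b$ and the $k''(0)\lambda$-term combine into an effective relation of the schematic form $E_0\approx \frac{C_1b^2}{\lambda}-C_2k''(0)\lambda$. Together with conservation of mass and energy this selects the scaling law. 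I expect the modulation system to reduce at leading order to $-\lambda_s/\lambda=b$, $b_s\approx-\tfrac12 b^2$ and $\gamma_s\approx1$, whose integration gives $b\sim 2/s$, $\lambda\sim s^{-2}$; since $\frac{C_1b^2}{\lambda}\to$ const pins $\lambda^*$ in terms of $E_0$, and $s\sim 1/(\lambda^*|t|)$ upon returning to the physical time, one recovers $\lambda(t)=\lambda^*t^2+\mathcal O(t^5)$, $\gamma(t)=1/(\lambda^*|t|)+\mathcal O(t)$, and the blowup rate $\|D^{1/2}u(t)\|_2\sim\lambda^{-1/2}\sim C(u_0)/|t|$.

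I would then make this rigorous through the decomposition $u=\lambda^{-1/2}(Q_b+\varepsilon)(s,y)e^{i\gamma}$, fixing $(\lambda,\gamma,b)$ by imposing orthogonality of $\varepsilon$ to a suitable set of directions drawn from the generalized null space of the linearized flow, derive the evolution equation for $\varepsilon$, and close the argument by a bootstrap. The technical core is a Lyapunov/energy functional combining the conserved energy with a localized virial-type quantity, whose monotonicity controls $\|\varepsilon\|_{H^{1/2}}$ and relies on the coercivity of the linearized operator $\mathcal L=(L_+,L_-)$ modulo its null and orthogonality directions. Finally I would run the construction backward in time: for a sequence $t_n\nearrow 0^-$, solve \eqref{equ-1-hf-in} backward from $u_n(t_n)=\lambda(t_n)^{-1/2}Q_{b(t_n)}(\cdot/\lambda(t_n))e^{i\gamma(t_n)}$, use the uniform bootstrap bounds on $[t^*,t_n]$ with $t^*$ independent of $E_0$, and extract a limit $u_n\to u$ yielding the desired solution with $\|u\|_2=\|Q\|_2$ and $E(u)=E_0$.

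I anticipate two main obstacles. First, the nonlocality of $D$: unlike the Schr\"odinger case there is no clean multiplication operator generating the pseudoconformal symmetry, so the $b$-profile must be built through a tail-corrected ansatz, and Leibniz-rule manipulations must be replaced everywhere by commutator estimates for $D$. Second, the spectral analysis: establishing the coercivity of the linearized half-wave operator on the orthogonal complement of its null space is considerably more delicate than for $-\Delta$, and the symmetry-breaking terms produced by $k$ enter the virial computation and must be shown not to destroy the monotonicity of the Lyapunov functional. I expect the energy-estimate and bootstrap step to absorb the bulk of the technical effort.
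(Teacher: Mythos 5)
Your proposal follows the same overall strategy as the paper: modulated self-similar variables with $b=-\lambda_s/\lambda$, an approximate profile built by inverting $L_\pm$ order by order (with solvability coming from $(\Lambda Q,Q)=0$ and $\ker L_-=\mathrm{span}\{Q\}$), orthogonality conditions fixing $(b,\lambda,\gamma)$, a localized energy/virial Lyapunov functional resting on coercivity of the linearized operator, a backwards bootstrap, and a compactness argument launched from profile data at times $t_n\nearrow 0^-$; the parameter laws $b_s\approx-\tfrac12 b^2$, $\lambda_s=-b\lambda$, $\gamma_s\approx 1$ and the resulting rate are exactly the paper's. The one genuine difference is where the inhomogeneity enters the profile. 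You keep the homogeneous profile $Q_b$ (solving $DQ_b+Q_b-|Q_b|^2Q_b+ib\Lambda Q_b\approx 0$) and bring in $k$ only through the Taylor expansion at the level of the energy identity. The paper instead treats $\lambda$ itself as a profile parameter, constructing $Q_{\mathcal{P}}$ with $\mathcal{P}=(b,\lambda)$ and adding the correction $\lambda^2 T_{0,2}$ with $L_+T_{0,2}=\tfrac12 k''(0)y^2Q^3$ (solvable by parity), so that the term $(k(\lambda y)-1)|Q|^2Q$ is absorbed into the profile equation, the residual error is $\mathcal{O}(b^5+\lambda^2|\mathcal{P}|)$, and $\partial_\lambda Q_{\mathcal{P}}$ appears as an extra direction in the modulation system. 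With your ansatz that term, of size $\mathcal{O}(\lambda^2)$, real and even, stays in the forcing error $\Phi$. In the blowup regime $\lambda\sim A_0^2b^2$ it has the same order $b^4$ as remainders the modulation and bootstrap estimates already tolerate, so your rough bounds ($\|\epsilon\|_{L^2}\lesssim\lambda$, the law $b/\lambda^{1/2}\to 1/A_0$, and $\|D^{1/2}u\|_{L^2}\sim |t|^{-1}$) should still close; but the finer statements of the theorem, namely the profile-error bound, the improved estimate $\left|\lambda_s/\lambda+b\right|\lesssim b^5+\dots$, and the precision $\lambda(t)=\lambda^* t^2+\mathcal{O}(t^5)$, are obtained in the paper from the corrected profile. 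So either enlarge your ansatz by $T_{0,2}$ (noting that $k$ even and $k'(0)=0$ force the mixed corrections $T_{0,1},T_{1,1},T_{2,1}$ to vanish, which is what keeps the hierarchy small), or be prepared to track the $\mathcal{O}(\lambda^2)$ error explicitly through the modulation equations and the virial computation to check it degrades nothing beyond the stated remainders.
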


Comments on this result.

1. For the inhomogeneous case, the function $k$ destroy the symmetry of the half-wave equation. Due to the lack of  homogeneity of the nonlinear terms, we cannot construct the blowup profile like the inhomogeneous Schr\"odinger equation \cite{Raphael2011-Jams} or the homogeneous half wave equation \cite{Georgiev-Li-2021-JFA,Georgiev-Li-2022-CPDE,KLR2013}. Fortunately, we can use another method to construct the blowup profile in the general case.  But  the translation parameter is not well controlled since the inhomogeneous half-wave equation does not satisfy the momentum law.  On the other hand, the pseudoconformal symmetry plays an important role in  controlling the translation parameter in the inhomogeneous NLS equation, see \cite{Raphael2011-Jams}.  Hence, in the present paper, we only consider the radial case.

2. The degenerate case. If the inhomogeneous factor $k$ satisfies $$k^\prime(x_0)=k^{\prime\prime}(x_0)=0,$$
the method presented in this paper should also be able to treat this degenerate case, which should in fact be easier to handle.

3. In the present paper, the inhomogeneous factor is not sharp, unlike the inhomogeneous NLS equation \cite{M1996}, it seems difficult to obtain the nonexistence result. On the other hand,  since the general criterion for blowup solutions for $L^2$-critical and $L^2$-supercritical half-wave equation is still an open problem (see \cite{lenzmann-2016blowup} for more details).

This paper is organized as follows: in Section 2,  we construct the high order approximate  solution $Q_{\mathcal{P}}$ of the renormalized equation; in Section 3, we decompose the solution and estimate the modulation parameters; in Section 4, we establish a refined energy/virial type estimate; in Section 5, we apply the energy estimate to establish a bootstrap argument that will be needed in the construction of ground state mass blowup solutions; in Section 6, we prove the Theorem \ref{Theorem-1-hf-2}; The Section 7 is Appendix.\\
\textbf{Notations}\\
- $(f,g)=\int \bar{f}g$ as the inner product on $L^2(\mathbb{R})$.\\
- $\|\cdot\|_{L^p}$ denotes the $L^p(\mathbb{R})$ norm for $p\geq 1$.\\
- $\widehat{f}$ denotes the Fourier transform of function $f$.\\
- We shall use $X\lesssim Y$ to denote that $X\leq CY$ holds, where the constant $C>0$ may change from line to line, but $C$ is allowed to depend on universally fixed quantities only.\\
- Likewise, we use $X\sim Y$ to denote that both $X\lesssim Y$ and $Y\lesssim X$ hold.

\section{Approximate Blowup Profile}
This section is devoted to the construction of the approximate blowup profile. For a sufficiently regular function $f:\mathbb{R}\rightarrow\mathbb{C}$, we define the generator of $L^2$ scaling given by
\begin{align}\notag
	\Lambda f:=\frac{1}{2}f+x\cdot\nabla f.
\end{align}
Note that the operator $\Lambda$ is skew-adjoint on $L^2(\mathbb{R})$, that is, we have
\begin{align}\notag
	(\Lambda f, g)=-(f,\Lambda g).
\end{align}
We write $\Lambda^kf$, with $k\in\mathbb{N}$, for the iterates of $\Lambda$ with the convention that $\Lambda^0f\equiv f$.

From now on and for the rest of this paper, we assume that $k$ satisfies {\bf Assumption} \eqref{assumption-1}. Moreover, without loss of generality, we assume that $k$ attains its maximum at $x_0=0$ which is nondegenerate
\begin{align}\label{asumption-2}
    k(0)=1,\,\,k^\prime(0)=0,\,\,\,k^{\prime\prime}(0)<0.
\end{align}
	
In some parts of this paper, it will be convenient to identity any complex-valued function $f:\mathbb{R}\rightarrow\mathbb{C}$, so that, the complex-valued function $f$ can be write as follows:
\begin{align}\notag
	f=f_1+if_2.
\end{align}

We start with a general observation: If $u=u(t,x)$ solves \eqref{equ-1-hf-in}, then we define the function $v=v(s,y)$ by setting
\begin{align}
u(t,x)=\frac{1}{\lambda^{\frac{1}{2}}(t)}v\left(s,\frac{x}{\lambda(t)}\right)e^{i\gamma(t)},\ \ \frac{ds}{dt}=\frac{1}{\lambda(t)}.
\end{align}
It is easy to check that $v=v(s,y)$ with $y=\frac{x}{\lambda}$ satisfies
\begin{equation}\label{equ-transform-hf-N}
	i\partial_sv-Dv-v+k(\lambda(t)y)|v|^2v=i\frac{\lambda_s}{\lambda}\Lambda v++\tilde{\gamma_s}v,
\end{equation}
where we set $\tilde{\gamma_s}=\gamma_s-1$. Here the operator $D$ is understood as $D=D_y$. Following the slow modulated ansatz strategy developed in \cite{Raphael-2009-cpam,Raphael2011-Jams,KLR2013}, we freeze the modulation
\begin{align}\label{mod1}
		-\frac{\lambda_s}{\lambda}=b.
\end{align}
And we look for an approximate solution of the form
\begin{align}\label{eq.der1}
	v(s,y) = Q_{\mathcal{P}(s)}(y),\ \mathcal{P}(s)=(b(s),\lambda(s)),
\end{align}
with an expansion
\begin{align*}
    Q_{\mathcal{P}(s)}(y)=Q(y)+\sum_{k+l\geq1}b^k\lambda^l\mathbf{R}_{k,l},
\end{align*}
where
\begin{align*}
    \mathbf{R}_{k,l}=T_{k,l}+iS_{k,l}.
\end{align*}
These asymptotic expansions suggests to define $b(s)$ so that
\begin{align}\label{as11}
	b_s=-\frac{b^2}{2}\,\,\,\,\text{and}\,\,\,\,\lambda_s=-\lambda b.
\end{align}	
Therefore, our purpose is  to construct a high order approximation $Q(y,b,\lambda)=Q_{\mathcal{P}}$ that is a solution to
\begin{align}\notag
-i\frac{b^2}{2}\partial_bQ_{\mathcal{P}}-ib\lambda\partial_{\lambda}Q_{\mathcal{P}}-DQ_{\mathcal{P}}-Q_{\mathcal{P}}+ib\Lambda Q_{\mathcal{P}}+
k(\lambda(t)y)|Q_{\mathcal{P}}|^2Q_{\mathcal{P}}=0,
\end{align}
where $\mathcal{P}=(b,\lambda)$ and $|\mathcal{P}|$ is close to $0$.
	
We have the following result about an approximate blowup profile $Q_{\mathcal{P}}$, parameterized by $\mathcal{P}=(b,\lambda)$, around the ground state $Q$ of the homogeneous half-wave equation.
\begin{lemma}(\textbf{Approximate Blowup Profile})\label{lemma-3app}
There exists a small constant $\eta^*>0$ such that for all
$|\mathcal{P}|=|(b,\lambda)|\leq \eta^*$. There exists a smooth function $Q_{\mathcal{P}}=Q_{\mathcal{P}}(x)$ of the form
\begin{align}\label{approximate-solution-in}
	Q_{\mathcal{P}}=&Q+bS_{1,0}+b^2T_{2,0}+\lambda^2T_{0,2}+b^3S_{3,0}+b^4T_{4,0}
\end{align}
that satisfies the equation
\begin{align}\label{equ-3approxiamte}
&-i\frac{b^2}{2}\partial_bQ_{\mathcal{P}}-ib\lambda\partial_{\lambda}Q_{\mathcal{P}}-DQ_{\mathcal{P}}-Q_{\mathcal{P}}+ib\Lambda Q_{\mathcal{P}}+k(\lambda(t)y)|Q_{\mathcal{P}}|^2Q_{\mathcal{P}}=-\Phi_{\mathcal{P}},
\end{align}
Here the functions $\{\mathbf{R}_{k,l}\}_{0\leq k\leq4,0\leq l\leq 1}$ satisfy the following regularity and decay bounds:
\begin{align}\label{3app-regulairty}
\|\mathbf{R}_{k,l}\|_{H^m}+\|\Lambda\mathbf{R}_{k,l}\|_{H^m}+\|\Lambda^2\mathbf{R}_{k,l}\|_{H^m}\lesssim1,\ &\text{for}\ m\in\{0,1\},\\\label{3app-decay}
|\mathbf{R}_{k,l}|+|\Lambda\mathbf{R}_{k,l}|+|\Lambda^2\mathbf{R}_{k,l}|\lesssim\langle x\rangle^{-2},\ &\text{for}\ x\in\mathbb{R}.
\end{align}
Moreover, the term on the right-hand side of \eqref{equ-3approxiamte} satisfies
\begin{align}\label{3app-regu-decay}
\|\mathbf{\Phi}_{\mathcal{P}}\|_{H^m}\lesssim\mathcal{O}(b^5+\lambda^2|\mathcal{P}|),\ |\nabla\mathbf{\Phi}_{\mathcal{P}}|\lesssim\mathcal{O}(b^5+\lambda^2|\mathcal{P}|)\langle x\rangle^{-2},
\end{align}
for $m\in\{0,1\}$ and $x\in\mathbb{R}$.
\end{lemma}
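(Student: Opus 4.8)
The plan is to determine the profiles $\mathbf{R}_{k,l}=T_{k,l}+iS_{k,l}$ one monomial at a time by inserting the polynomial ansatz \eqref{approximate-solution-in} into \eqref{equ-3approxiamte} and matching powers of $(b,\lambda)$. First I would use the frozen laws \eqref{as11} to read the two generator terms as $i\partial_s Q_{\mathcal P}=-i\frac{b^2}{2}\partial_b Q_{\mathcal P}-ib\lambda\partial_\lambda Q_{\mathcal P}$, and Taylor expand the inhomogeneity at the nondegenerate maximum using \eqref{asumption-2}, namely $k(\lambda y)=1+\tfrac12 k''(0)\lambda^2 y^2+\mathcal O(\lambda^3)$, so that the leading inhomogeneous forcing appears only at order $\lambda^2$ and is even. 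Writing $Q_{\mathcal P}=Q+P$ and expanding $k(\lambda y)|Q_{\mathcal P}|^2 Q_{\mathcal P}$ to the required order, the $\mathcal O(1)$ term reproduces the ground state equation $DQ+Q-Q^3=0$, while each higher monomial yields a linear equation driven by the two self-adjoint operators $L_+=D+1-3Q^2$ and $L_-=D+1-Q^2$.

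Concretely I expect the hierarchy $L_- S_{1,0}=\Lambda Q$ at order $b$; the even equations $L_+ T_{2,0}=F_{2,0}$ and $L_+ T_{0,2}=\tfrac12 k''(0)\,y^2 Q^3$ at orders $b^2$ and $\lambda^2$; and $L_- S_{3,0}=F_{3,0}$, $L_+ T_{4,0}=F_{4,0}$ at orders $b^3,b^4$, with right-hand sides built from the previously constructed profiles (and from $Q$ and $k''(0)$ for the $\lambda^2$ step). The algebraic backbone is the pair of identities $L_- Q=0$ and $L_+\Lambda Q=-Q$; the latter follows by applying $\Lambda$ to the ground state equation together with the commutator $[\Lambda,D]=-D$. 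Since $\Lambda$ is skew-adjoint one has $(\Lambda Q,Q)=0$, which is exactly the solvability condition making the order-$b$ equation, i.e.\ the inversion of $L_-$ on $\{Q\}^\perp$, well posed; the analogous orthogonality at the odd orders, where $L_-$ is again inverted, is where the precise coefficients in \eqref{as11} are used. For the even orders the relevant input is the nondegeneracy of $L_+$ from \cite{KLR2013}: its kernel is spanned by the odd function $Q'$, so $L_+$ is boundedly invertible on the even sector in which all forcing terms lie, and no further condition is needed.

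Having solved the hierarchy, I would establish \eqref{3app-regulairty}–\eqref{3app-decay} through the mapping properties of $L_\pm^{-1}$: every forcing term is a finite combination of $Q$, its $\Lambda$-iterates, and products thereof (such as $\Lambda Q$, $y^2 Q^3$, $QS_{1,0}^2$), all of which lie in $H^m$ and decay at least like $\langle x\rangle^{-2}$; one then invokes the elliptic estimates for the nonlocal operators $L_\pm$ to propagate $H^m$-regularity and the $\langle x\rangle^{-2}$ pointwise decay onto the $\mathbf{R}_{k,l}$ and their $\Lambda$-iterates. Finally, collecting all monomials not matched by the six retained profiles, namely those of order $b^5$ and higher in the $b$-chain, the mixed terms of size $\lambda^2|\mathcal P|$, and the $\mathcal O(\lambda^3)$ Taylor remainder of $k$, produces $\Phi_{\mathcal P}$, and the bounds \eqref{3app-regu-decay} follow from the profile estimates just obtained.

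The main obstacle I anticipate is twofold. Algebraically, verifying the orthogonality (solvability) conditions at the odd orders $b$ and $b^3$ requires careful parity bookkeeping together with repeated use of the skew-adjointness of $\Lambda$ and the identity $L_+\Lambda Q=-Q$; these are what tie the coefficients of \eqref{as11} to consistency of the construction. Analytically, the genuinely hard point, absent in the NLS analogue of \cite{Raphael2011-Jams}, is the decay estimate: because $D=|D|$ is nonlocal, the profiles do not decay exponentially and one cannot argue by ODE methods. Establishing the sharp $\langle x\rangle^{-2}$ decay for $L_\pm^{-1}$ applied to the forcing, and showing it survives under $\Lambda$ and $\Lambda^2$, is the technical heart of the lemma and relies on the resolvent and kernel decay structure of the linearized half-wave operators from \cite{KLR2013}.
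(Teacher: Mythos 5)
Your proposal follows essentially the same route as the paper: order-by-order matching of the ansatz, inversion of $L_\pm$ with solvability checks (using $(\Lambda Q,Q)=0$ at order $b$, parity/nondegeneracy of $L_+$ at the even orders, and the identity $L_+\Lambda Q=-Q$ together with skew-adjointness and commutator bookkeeping at order $b^3$ — exactly the ingredients the paper uses to verify $-(Q,T_{2,0})=\tfrac12(S_{1,0},S_{1,0})$), with regularity, decay, and the error bounds obtained from the mapping properties of $L_\pm^{-1}$ as in \cite{KLR2013}. The only difference is one of completeness, not of method: you flag the $b^3$ solvability computation rather than carry it out, whereas the paper executes it in full.
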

\begin{proof}
We recall that the definition of the linear operator
\begin{equation}\notag
L={\left[ \begin{array}{cc}
		L_+ & 0\\
		0 & L_{-}
		\end{array}	\right ]}
\end{equation}
acting on $L^2(\mathbb{R},\mathbb{R}^{2})$, where $L_+$ and $L_-$ denote the unbounded operators acting on $L^2(\mathbb{R},\ \mathbb{R}^{2})$ given by
\begin{align}\notag
	L_+=D+1-3Q^2,\ L_-=D+1-Q^2.
\end{align}
From \cite{Frank-lenzmann2013}, we have the key property that the kernel of $L$ is given by
\begin{equation}\notag
	\ker L=span\left\{{\left[ \begin{array}{c}
					\nabla Q\\ 0
				\end{array}
				\right ]},{\left[ \begin{array}{c}
					0\\Q
				\end{array}
				\right ]}\right\}.
\end{equation}
Note that the bounded inverse $L^{-1}=diag\{L_+^{-1},L_-^{-1}\}$ exists on the orthogonal complement $\{\ker L\}^{-1}=\{\nabla Q\}^{\bot}\bigoplus\{Q\}^{\bot}$.
				
$\mathbf{Step~1.}$ Determining the functions $\mathbf{R}_{k,l}$.
We discuss our ansatz for $Q_{\mathcal{P}}$ to solve \eqref{equ-3approxiamte} order by order. We inject \eqref{approximate-solution-in} into \eqref{equ-3approxiamte} and sort the terms of the same homogeneity.
		
\textbf{Order} $0$: Clearly, we have that
\begin{align*}
	DQ+Q-|Q|^2Q=0,
\end{align*}
since $Q$ being the ground state solution.
		
\textbf{Order} $1$: For $b$, from $k^{\prime}(0)=0$, we obtain
\begin{align*}
\begin{cases}
	L_+T_{1,0,}=0,\\
	L_-S_{1,0,}=\Lambda Q.
\end{cases}
\end{align*}
Due to the fact that $(\Lambda Q,Q)=0$, which can be easily seen by using the mass criticality.  Hence we can find a unique solution $S_{1,0}\perp\ker L_-$.

For $\lambda$, by the Taylor expansion, we have $T_{0,1}=S_{0,1}\equiv0$ since the function $k$ is even and $k^\prime(0)=0$.
		
\textbf{Order 2:} For $b^2$, we can obtain the following equation
\begin{align*}
\begin{cases}
	L_+T_{2,0}=\frac{1}{2}S_{1,0}-\Lambda S_{1,0}+S_{1,0,}^2Q,\\
	L_-S_{2,0}=0.
\end{cases}
\end{align*}
Since the kernel of $L_+$ is $\nabla Q$ and $S_{1,0}$, $Q$ are even. Hence we can find a unique solution $T_{2,0}$ to the above equation.

For $\lambda^2$, we have	
\begin{align*}
\begin{cases}
	L_+T_{0,2}=\frac{1}{2}K^{\prime\prime}(0)y^2Q^3,\\
	L_-S_{0,2}=0.
\end{cases}
\end{align*}
The solvability conditions read as
\begin{align*}
\left(\frac{1}{2}K^{\prime\prime}(0)y^2Q^3,\nabla Q\right)=0.
\end{align*}
However, this is obviously true, since $y^2$ and $Q$ are even functions.

For $\lambda b$, we can obtain that $T_{1,1}=S_{1,1}\equiv0$ since the assumption of the function $k$.

\textbf{Order 3.}  For $b^3$, we get
\begin{align*}
\begin{cases}
L_+T_{3,0}=0,\\
L_-S_{3,0}=-T_{2,0}+\Lambda T_{2,0}+2QT_{2,0}S_{1,0}+S_{1,0}^2S_{1,0}.
\end{cases}
\end{align*}
The solvability condition for $S_{3,0}$ is equivalent to
\begin{align}\label{approximate-3-in}
-(Q,T_{2,0})+(Q,\Lambda T_{2,0})+2(Q,QT_{2,0}S_{1,0})+(Q,S_{1,0}^2S_{1,0})=0.
\end{align}
To see \eqref{approximate-3-in} holds, we first note that
\begin{align*}
&\text{The right-hand side of above \eqref{approximate-3-in}}\\
=&-(Q,T_{2,0})-(\Lambda Q, T_{2,0})+2(T_{2,0},Q^2S_{1,0})+(Q,S_{1,0}^2S_{1,0})\\
=&-(Q,T_{2,0})-(L_-S_{1,0}, T_{2,0})+2(T_{2,0},Q^2S_{1,0})+(Q,S_{1,0}^2S_{1,0})\\
=&-(Q,T_{2,0})-(L_+S_{1,0}, T_{2,0})+(Q,S_{1,0}^2S_{1,0})\\
=&-(Q,T_{2,0})-\frac{1}{2}(S_{1,0},S_{1,0})+(S_{1,0},\Lambda S_{1,0})-(S_{1,0},S_{1,0}^2Q)+(Q,S_{1,0}^2S_{1,0})\\
=&-(Q,T_{2,0})-\frac{1}{2}(S_{1,0},S_{1,0}).
\end{align*}
Thus it remains to show that
\begin{align}\label{approximate-relation-in}
-(Q,T_{2,0})=\frac{1}{2}(S_{1,0},S_{1,0}).
\end{align}
Indeed, by using $L_+\Lambda Q=-Q$, we deduce
\begin{align}\label{approximate-4-in}
-(Q,T_{2,0})=&(\Lambda Q,\frac{1}{2}S_{1,0}-\Lambda S_{1,0}+S_{1,0}^2Q)\notag\\
=&\frac{1}{2}(L_-S_{1,0},S_{1,0})-(L_-S_{1,0},\Lambda S_{1,0})+(\Lambda Q,S_{1,0}^2Q)\notag\\
=&\frac{1}{2}(S_{1,0},DS_{1,0})+\frac{1}{2}(S_{1,0},S_{1,0})-\frac{1}{2}(S_{1,0},Q^2S_{1,0})\notag\\
&-(L_-S_{1,0},\Lambda S_{1,0})+(\Lambda Q,S_{1,0}^2Q).
\end{align}
Next, we apply $(L_-f,\Lambda f)=\frac{1}{2}(f,[L_-,\Lambda]f)$, which shows that
\begin{align}\label{approximate-5-in}
(L_-S_{1,0},\Lambda S_{1,0})=\frac{1}{2}(S_{1,0},[L_-,\Lambda]S_{1,0})
=&\frac{1}{2}(S_{1,0},[D,\Lambda]S_{1,0})-\frac{1}{2}(S_{1,0},[Q^2,\Lambda]S_{1,0})\notag\\
=&\frac{1}{2}(S_{1,0},DS_{1,0})+(S_{1,0},(x\cdot\nabla Q)QS_{1,0}).
\end{align}
Furthermore, we have the pointwise identity
\begin{align}\label{approximate-6-in}
-(x\cdot\nabla Q)Q+Q\Lambda Q=\frac{1}{2}Q^2.
\end{align}
Inserting \eqref{approximate-5-in} and \eqref{approximate-6-in} into \eqref{approximate-4-in}, we can obtain the desired relation \eqref{approximate-relation-in}, and thus the solvability condition \eqref{approximate-3-in} holds.

For $b^2\lambda$, we can easily obtain that $T_{2,1}=S_{2,1}\equiv0$ since the assumption of the function $k$.

\textbf{Order 4.} For $b^4$, we have
\begin{align*}
\begin{cases}
L_+T_{4,0}=\frac{3}{2}S_{3,0}-\Lambda S_{3,0}+T_{2,0}^2Q+2S_{1,0}S_{3,0}Q+2T_{2,0}QT_{2,0},\\
L_-S_{4,0}=0.
\end{cases}
\end{align*}
We notice that the right-hand side of above is a even function.  Hence there is a unique solution $T_{4,0}\perp\ker L_{+}$.
				
$\mathbf{Step~2}$. Regularity and decay bounds. By the similar argument as \cite{KLR2013}, we can obtain the regularity and decay bounds. Here we omit the details.
		
$\mathbf{Step~3}$. We mention that the bounds \eqref{3app-regu-decay} for the error term $\mathbf{\Phi}_{\mathcal{P}}$ follow from expanding $|Q_{\mathcal{P}}|^2Q_{\mathcal{P}}$ and using the regularity and decay bounds for the functions  $\{\mathbf{R}_{k,l}\}$. We omit the straightforward details. The proof of lemma \ref{lemma-3app} is now complete.
\end{proof}
		
We now turn to some key properties of the approximate blowup profile $Q_{\mathcal{P}}$ constructed in lemma \ref{lemma-3app}.
\begin{lemma}\label{lemma-3app-2}
The mass and the energy  of $Q_{\mathcal{P}}$ satisfy
\begin{align*}
\int|Q_{\mathcal{P}}|^2&=\int  Q^2+\mathcal{O}(b^4+\lambda^2),\\
E(Q_{\mathcal{P}})&=e_1b^2-\frac{\lambda^2}{8}\int K^{\prime\prime}(0)y^2Q^4+\mathcal{O}(b^4+\lambda^2).
\end{align*}
Here $e_1>0$ is the positive constant given by
\begin{align}\notag
e_1=\frac{1}{2}(L_{-}S_{1,0},S_{1,0}),\
\end{align}
where $S_{1,0}$ satisfy $L_{-}S_{1,0}=\Lambda Q$.
\end{lemma}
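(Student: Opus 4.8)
The plan is to write the profile through its real and imaginary parts, $Q_{\mathcal{P}}=R+iI$ with $R=Q+b^2T_{2,0}+\lambda^2T_{0,2}+b^4T_{4,0}$ and $I=bS_{1,0}+b^3S_{3,0}$, and to expand the two conserved quantities in powers of $b,\lambda$, keeping only terms larger than the error $\mathcal{O}(b^4+\lambda^2)$. Since $|Q_{\mathcal{P}}|^2=R^2+I^2$, collecting orders gives $\int|Q_{\mathcal{P}}|^2=\int Q^2+b^2\big(2(Q,T_{2,0})+(S_{1,0},S_{1,0})\big)+\mathcal{O}(b^4+\lambda^2)$, and the bracketed coefficient vanishes identically by the relation \eqref{approximate-relation-in}, namely $-(Q,T_{2,0})=\tfrac12(S_{1,0},S_{1,0})$, already established in Lemma \ref{lemma-3app}. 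This immediately yields the stated mass expansion.

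For the energy I would first dispose of the zeroth order term. Pairing the ground state equation $DQ+Q=Q^3$ with $\Lambda Q$ and using $(\Lambda Q,Q)=0$, the scaling identity $(DQ,\Lambda Q)=\tfrac12(Q,DQ)$ and the Pohozaev relation $(\Lambda Q,Q^3)=\tfrac14\int Q^4$, one gets $(Q,DQ)=\tfrac12\int Q^4$, hence $E(Q)=\tfrac12(Q,DQ)-\tfrac14\int Q^4=0$. Next, expanding $|Q_{\mathcal{P}}|^4=R^4+2R^2I^2+I^4$ and using that $D$ is self-adjoint and annihilates the cross term between $R$ and $I$, the $b^2$-part of $E(Q_{\mathcal{P}})$ is $(Q,DT_{2,0})+\tfrac12(S_{1,0},DS_{1,0})-\int Q^3T_{2,0}-\tfrac12\int Q^2S_{1,0}^2$. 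Substituting $DQ=Q^3-Q$ collapses $(Q,DT_{2,0})-\int Q^3T_{2,0}$ into $-(Q,T_{2,0})$, and one more application of \eqref{approximate-relation-in} regroups the remainder into $\tfrac12\big[(S_{1,0},DS_{1,0})+(S_{1,0},S_{1,0})-(S_{1,0},Q^2S_{1,0})\big]=\tfrac12(L_-S_{1,0},S_{1,0})=e_1$. Positivity is then immediate: $L_-\ge0$ with $\ker L_-=\mathrm{span}\{Q\}$, and since $S_{1,0}\perp Q$ by construction we obtain $e_1>0$.

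For the $\lambda$-dependence I would Taylor expand $k(\lambda y)=1+\tfrac12k''(0)\lambda^2y^2+\mathcal{O}(\lambda^4)$, legitimate because $k$ is even with $k'(0)=0$. The quadratic coefficient evaluated against the leading quartic $Q^4$ produces the displayed inhomogeneous term $-\tfrac{\lambda^2}{8}k''(0)\int y^2Q^4$, while the remaining $\lambda^2$ pieces are organized, exactly as in the $b^2$ computation, through $L_+T_{0,2}=\tfrac12k''(0)y^2Q^3$ and the identity $L_+\Lambda Q=-Q$; since all of these are already of size $\mathcal{O}(\lambda^2)$, they fall within the stated error, so only the recorded $\lambda^2$ term needs to be tracked.

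The hard part will be the $b^2$ algebra for the energy: it reproduces, now inside the energy functional rather than as a solvability condition, precisely the cancellations of \eqref{approximate-4-in}--\eqref{approximate-6-in}, so it requires the same careful handling of the cubic term $(Q,S_{1,0}^2Q)$ and of the commutator $[L_-,\Lambda]$ before everything reassembles into the single quadratic form $(L_-S_{1,0},S_{1,0})$. Everything else is order-by-order bookkeeping combined with the regularity and decay bounds \eqref{3app-regulairty}--\eqref{3app-decay}, which guarantee convergence of all the integrals involved.
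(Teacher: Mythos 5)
Your proposal is correct and follows essentially the same route as the paper: expand mass and energy order by order, cancel the $b^2$ mass term via the relation \eqref{approximate-relation-in}, use $DQ=Q^3-Q$ together with \eqref{approximate-relation-in} to reassemble the $b^2$ energy coefficient into $\tfrac{1}{2}(L_-S_{1,0},S_{1,0})=e_1$, and Taylor-expand $k(\lambda y)$ to extract the $-\tfrac{\lambda^2}{8}k''(0)\int y^2Q^4$ term while absorbing the remaining $\lambda^2$ contributions into the error. The only (welcome) additions are your Pohozaev-type derivation of $E(Q)=0$ and the argument $L_-\geq0$, $S_{1,0}\perp Q=\ker L_-$, $L_-S_{1,0}=\Lambda Q\neq0$ giving $e_1>0$, both of which the paper asserts without proof.
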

\begin{proof}
From the proof of lemma \ref{lemma-3app}, we have
\begin{align*}
\int|Q_{\mathcal{P}}|^2=&\int |Q+ibS_{1,0}+b^2T_{2,0}+\lambda^2T_{0,2}+ib^3S_{3,0}+b^4T_{4,0}|^2\\
=&\int Q^2+b^2(S_{1,0},S_{1,0})+2b^2(Q,T_{2,0})+\mathcal{O}(b^4+\lambda^2)\\
=&\int Q^2+\mathcal{O}(b^4+\lambda^2),
\end{align*}
where we used the relation \eqref{approximate-relation-in}
\begin{align*}
	(S_{1,0},S_{1,0})+2(Q,T_{2,0})=0.
\end{align*}	
To treat the expansion of the energy, we have
\begin{align*}
E(Q_{\mathcal{P}})=&\frac{1}{2} (Q_{\mathcal{P}},D Q_{\mathcal{P}})-\frac{1}{4}\int k(\lambda y)|Q_{\mathcal{P}}|^4\\
=&\frac{1}{2}(Q,DQ)-\frac{1}{4}\int k(\lambda y)|Q|^4+b^2\left\{\frac{1}{2}(S_{1,0},DS_{1,0})+(T_{2,0},D Q)\right\}\\
&-\frac{1}{2}b^2\int \left(k(\lambda y)|Q^2S_{1,0}^2+2Q^3T_{2,0}|\right)+\mathcal{O}(b^4+\lambda^2)\\
=&\frac{1}{2}(Q,DQ)-\frac{1}{4}\int |Q|^4+b^2\left\{\frac{1}{2}(S_{1,0},DS_{1,0})+(T_{2,0},D Q)\right\}\\
&-\frac{1}{2}b^2\int k(\lambda y)\left(|Q^2S_{1,0}^2+2Q^3T_{2,0}|\right)-\frac{1}{4}\int\left(k(\lambda y)-1\right)|Q|^4\\
&+\mathcal{O}(b^4+\lambda^2)\\
=&b^2\left\{\frac{1}{2}(S_{1,0},DS_{1,0})+(T_{2,0},DQ)-\frac{1}{2}(Q,QS_{1,0}^2)-(T_{2,0},Q^3)\right\}\}\\
&-\frac{1}{4}\int\left( k(\lambda y)-1\right)|Q|^4+\mathcal{O}(b^4+\lambda^2)\\
=&b^2\left\{\frac{1}{2}(S_{1,0},DS_{1,0})-(T_{2,0},Q)-\frac{1}{2}(Q,QS_{1,0}^2)\right\}\\
&-\frac{1}{4}\int\left(k(\lambda y)-1\right)|Q|^4+\mathcal{O}(b^4+\lambda^2)\\
=&b^2\left\{\frac{1}{2}(S_{1,0},DS_{1,0})+\frac{1}{2}(S_{1,0},S_{1,0})-\frac{1}{2}(Q,QS_{1,0}^2)\right\}\\
&-\frac{1}{4}\int\left(k(\lambda y)-1\right)|Q|^4+\mathcal{O}(b^4+\lambda^2)\\
=&\frac{1}{2}b^2(L_-S_{1,0},S_{1,0})-\frac{\lambda^2}{8}\int k^{\prime\prime}(0)y^2Q^4+\mathcal{O}(b^4+\lambda^2).
\end{align*}
Here we used $\tilde{E}(Q)=\frac{1}{2}(Q,DQ)-\frac{1}{4}\int |Q|^4=0$, $(S_{1,0},S_{1,0})+2(Q,T_{2,0})=0$ and expanding the inhomogenity $k(\lambda y)$ near $0$.
The proof of lemma \ref{lemma-3app-2} is now complete.
\end{proof}

\section{Modulation Estimates}\label{section-mod-estimate}
	
%\subsection{Geometrical Decomposition and Modulation Equations}	
Let $u\in H^{1/2}(\mathbb{R})$ be a solution of \eqref{equ-1-hf-in} on some time interval $[t_0,t_1]$ with $t_1<0$. Assume that $u(t)$ admits a geometrical decomposition of the form
\begin{align}\label{mod-decomposition}
u(t,x)=\frac{1}{\lambda^{\frac{1}{2}}(t)}[Q_{\mathcal{P}(t)}+\epsilon]\left(s,\frac{x}{\lambda(t)}\right)e^{i\gamma(t)},\ \ \frac{ds}{dt}=\frac{1}{\lambda(t)},
\end{align}
with $\mathcal{P}(t)=(b(t),\lambda(t))$, and we impose the uniform smallness bound
\begin{align}
	|\mathcal{P}|+\|\epsilon\|_{H^{1/2}}^2\lesssim\lambda(t)\ll1.
\end{align}
Furthermore, we assume that $u(t)$ has almost critical mass in the sense that
\begin{align}\label{mod-mass-assume}
\left|\int|u(t)|^2-\int Q^2\right|\lesssim\lambda^2(t),\ \ \forall t\in[t_0,t_1].
\end{align}
To fix the modulation parameters $\{b(t),\lambda(t),\gamma(t)\}$ uniquely, we impose the following orthogonality conditions on $\epsilon=\epsilon_1+i\epsilon_2$ as follows:
\begin{align}\label{mod-orthogonality-condition}
\begin{cases}
(\epsilon_2,\Lambda Q_{1\mathcal{P}})-(\epsilon_1,\Lambda Q_{2\mathcal{P}})=0,\\
(\epsilon_2,\partial_bQ_{1\mathcal{P}})-(\epsilon_1,\partial_bQ_{2\mathcal{P}})=0,\\
(\epsilon_2,\rho_1)-(\epsilon_1,\rho_2)=0,
\end{cases}
\end{align}
the function $\rho=\rho_1+i\rho_2$ is defined by
\begin{align}\label{mod-definition-rho}
	L_{+}\rho_1=S_{1,0},\,\, L_{-}\rho_2=bS_{1,0}\rho_1+b\Lambda\rho_1-2bT_{2,0},
\end{align}
where $S_{1,0}$ and $T_{2,0}$ are the functions introduced in the proof of lemma \ref{lemma-3app}. Note that $L_{+}^{-1}$ exists on $L^2_{rad}(\mathbb{R})$ and thus $\rho_1$ is well-defined. Moreover, it is easy to see that the right-hand side in the equation for $\rho_2$ is orthogonality to $Q$. Indeed
\begin{align*}
(Q,S_{1,0}\rho_1+\Lambda\rho_1-2T_{2,0})
&=(QS_{1,0},\rho_1)-(\Lambda Q,\rho_1)-2(Q,T_{2,0})\\
&=(QS_{1,0},\rho_1)-(S_{1,0},L_{-}\rho_1)+(S_{1,0},S_{1,0})\\
&=-(S_{1,0},L_{+}\rho_1)+(S_{1,0},S_{1,0})=0,
\end{align*}
using that $(S_{1,0},S_{1,0})=-2(T_{2,0},Q)$, see \eqref{approximate-relation-in}, and the definition of $\rho_1$.  Hence $\rho_2$ is well-defined.
	
In the conditions \eqref{mod-orthogonality-condition}, we use the notation
\begin{align}\notag
		Q_{\mathcal{P}}=Q_{1\mathcal{P}}+iQ_{2\mathcal{P}}.
\end{align}

By the standard arguments as \cite{KLR2013,Georgiev-Li-2022-CPDE,Georgiev-Li-2021-JFA},  the orthogonality conditions \eqref{mod-orthogonality-condition} imply that the modulation parameters $\{b(t),\lambda(t),\gamma(t)\}$ are uniquely determined, provided that $\epsilon=\epsilon_1+i\epsilon_2\in H^{1/2}(\mathbb{R})$ is sufficiently small. Moreover, it follows from the standard arguments that $\{b(t),\lambda(t),\gamma(t)\}$ are $C^1$-functions.
	
If we insert the decomposition \eqref{mod-decomposition} into \eqref{equ-1-hf-in}, we obtain the following system
\begin{align}\label{mod-system-1}
&\left(b_s+\frac{1}{2}b^2\right)\partial_bQ_{1\mathcal{P}}+\lambda\left(\frac{\lambda_s}{\lambda}+b\right)\partial_{\lambda}Q_{1\mathcal{P}}+\partial_s\epsilon_1
-M_{-}(\epsilon)+b\Lambda\epsilon_1\notag\\
=&\left(\frac{\lambda_s}{\lambda}+b\right)(\Lambda Q_{1\mathcal{P}}+\Lambda\epsilon_1)
+\tilde{\gamma}_s(Q_{2\mathcal{P}}+\epsilon_2)+\Im(\Phi_{\mathcal{P}})-R_2(\epsilon),\\\label{mod-system-2}
&\left(b_s+\frac{1}{2}b^2\right)\partial_bQ_{2\mathcal{P}}+\lambda\left(\frac{\lambda_s}{\lambda}+b\right)\partial_{\lambda}Q_{2\mathcal{P}}
+\partial_s\epsilon_2
-M_{+}(\epsilon)+b\Lambda\epsilon_2\notag\\
=&\left(\frac{\lambda_s}{\lambda}+b\right)(\Lambda Q_{2\mathcal{P}}+\Lambda\epsilon_2)
+\tilde{\gamma}_s(Q_{2\mathcal{P}}+\epsilon_1)+\Re(\Phi_{\mathcal{P}})-R_1(\epsilon).
\end{align}
Here $\Phi_{\mathcal{P}}$ denotes the error term from lemma \ref{lemma-3app}, and $M=(M_{+},M_{-})$ are the small deformations of the linearized operator $L=(L_{+},L_{-})$ given by
\begin{align}\label{mod-define-M1}
M_{+}(\epsilon)=&D\epsilon_1+\epsilon_1-k(\lambda y)\left(|Q_{\mathcal{P}}|^2\epsilon_1
+2Q_{1\mathcal{P}}^2\epsilon_1+2Q_{1\mathcal{P}}Q_{2\mathcal{P}}\epsilon_2\right)\\\label{mod-define-M2}
M_{-}(\epsilon)=&D\epsilon_2+\epsilon_2
-k(\lambda y)\left(|Q_{\mathcal{P}}|^2\epsilon_2
+2Q_{1\mathcal{P}}^2\epsilon_2+2Q_{1\mathcal{P}}Q_{2\mathcal{P}}\epsilon_1\right).
\end{align}
And $R_1(\epsilon)$, $R_2(\epsilon)$ are the high order terms about $\epsilon$.
\begin{align*}
R_1(\epsilon)=&k(\lambda y)\left(3Q_{1\mathcal{P}}\epsilon_1^2+2Q_{2\mathcal{P}}\epsilon_1\epsilon_2+Q_{1\mathcal{P}}\epsilon_2^2+|\epsilon|^2\epsilon_1\right),\\
R_2(\epsilon)=&k(\lambda y)\left(3Q_{2\mathcal{P}}\epsilon_2^2+2Q_{1\mathcal{P}}\epsilon_1\epsilon_2+Q_{2\mathcal{P}}\epsilon_1^2+|\epsilon|^2\epsilon_2\right).
\end{align*}
	
We have the following energy type bound.
\begin{lemma}(\textbf{Preliminary estimate on the decomposition}.)\label{lemma-mod-1}
For $t\in[t_0,t_1]$ with $t_1<0$, there holds that
\begin{align}\label{mod-1-energy-estimate}
b^2+\|\epsilon\|_{H^{1/2}}^2\lesssim\lambda|E_0|+\mathcal{O}(\lambda^2+b^4).
\end{align}
Here $E_0=E(u_0)$ denote the conserved energy  of $u=u(t,x)$.

Define the vector-valued function
\begin{align}
	Mod(t):=\left(b_s+\frac{b^2}{2},\tilde{\gamma}_s,\frac{\lambda_s}{\lambda}+b\right).
\end{align}
Then, for $t\in[t_0,t_1]$, we have the bound
\begin{align}
	|Mod(t)|\lesssim \lambda^2+b^4+\mathcal{P}\|\epsilon\|_{L^2}+\|\epsilon\|_{L^2}^2+\|\epsilon\|_{H^{1/2}}^3.
\end{align}
Furthermore, we have the improved bound
\begin{align*}
	\left|\frac{\lambda_s}{\lambda}+b\right|\lesssim b^5+\mathcal{P}\|\epsilon\|_{L^2}+\|\epsilon\|_{L^2}^2+\|\epsilon\|_{H^{1/2}}^2.
\end{align*}
\end{lemma}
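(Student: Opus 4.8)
The plan is to establish the three assertions by two different mechanisms: the energy bound rests on the conservation laws together with the coercivity of the linearized Hamiltonian, while the modulation bounds come from projecting the evolution system \eqref{mod-system-1}--\eqref{mod-system-2} onto the directions fixed by the orthogonality conditions \eqref{mod-orthogonality-condition}.

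For the energy bound I would first use the scaling relation between the conserved energy and the renormalized energy at scale $\lambda$, namely $\lambda E_0=\tilde{E}_\lambda(Q_{\mathcal{P}}+\epsilon)$ with $\tilde{E}_\lambda(v)=\frac{1}{2}(v,Dv)-\frac{1}{4}\int k(\lambda y)|v|^4$. Expanding around $Q_{\mathcal{P}}$ gives $\lambda E_0=E(Q_{\mathcal{P}})+(\text{linear in }\epsilon)+\frac{1}{2}(M\epsilon,\epsilon)+(\text{higher order})$, where $M=(M_+,M_-)$ is the deformed operator of \eqref{mod-define-M1}--\eqref{mod-define-M2}. By Lemma \ref{lemma-3app-2}, $E(Q_{\mathcal{P}})=e_1b^2+\mathcal{O}(\lambda^2+b^4)$ with $e_1>0$. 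The linear term I would rewrite using the profile equation \eqref{equ-3approxiamte}: it reduces to a multiple of $\mathrm{Re}(Q_{\mathcal{P}},\epsilon)$ plus terms paired against $\partial_bQ_{\mathcal{P}}$, $\Lambda Q_{\mathcal{P}}$ and $\Phi_{\mathcal{P}}$; the first is controlled by the almost-critical-mass hypothesis \eqref{mod-mass-assume} (which, via $\|Q_{\mathcal{P}}+\epsilon\|_2^2=\|Q\|_2^2+\mathcal{O}(b^4+\lambda^2)$, forces $|\mathrm{Re}(Q_{\mathcal{P}},\epsilon)|\lesssim\lambda^2+b^4+\|\epsilon\|_{L^2}^2$), the next two vanish by \eqref{mod-orthogonality-condition}, and the last is $\mathcal{O}(\|\Phi_{\mathcal{P}}\|_{L^2}\|\epsilon\|)$. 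The crux is then the coercivity of $(M\epsilon,\epsilon)$: since $M$ is an $\mathcal{O}(|\mathcal{P}|)$ perturbation of $L=\mathrm{diag}(L_+,L_-)$, whose kernel is explicit (Frank--Lenzmann, recalled in the proof of Lemma \ref{lemma-3app}), the orthogonality conditions together with the pinned mass direction yield $(M\epsilon,\epsilon)\gtrsim\|\epsilon\|_{H^{1/2}}^2$. Combining these three inputs and absorbing higher-order terms gives $b^2+\|\epsilon\|_{H^{1/2}}^2\lesssim\lambda|E_0|+\mathcal{O}(\lambda^2+b^4)$.

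For the vector $Mod(t)$ I would differentiate each of the three conditions in \eqref{mod-orthogonality-condition} in $s$ and substitute $\partial_s\epsilon_1,\partial_s\epsilon_2$ from \eqref{mod-system-1}--\eqref{mod-system-2}. Because the directions $\Lambda Q_{\mathcal{P}}$, $\partial_bQ_{\mathcal{P}}$, $\rho$ are chosen so that their pairings against $\partial_bQ_{\mathcal{P}}$, $\Lambda Q_{\mathcal{P}}$ and $Q_{2\mathcal{P}}$ form a nondegenerate near-constant matrix, this produces a linear system $A\cdot Mod=\mathrm{RHS}$ with $A$ an $\mathcal{O}(|\mathcal{P}|)$-perturbation of an invertible matrix. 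The right-hand side collects the profile error $\Phi_{\mathcal{P}}$, of $L^2$-size $\mathcal{O}(b^5+\lambda^2|\mathcal{P}|)$ by \eqref{3app-regu-decay}; the inhomogeneity correction from $k(\lambda y)-1=\mathcal{O}(\lambda^2y^2)$, of size $\mathcal{O}(\lambda^2)$; the linear cross-terms $M(\epsilon)$ and $b\Lambda\epsilon$, of size $\mathcal{O}(|\mathcal{P}|\|\epsilon\|_{L^2})$; and the nonlinear remainders $R_1(\epsilon),R_2(\epsilon)$, of size $\mathcal{O}(\|\epsilon\|_{L^2}^2+\|\epsilon\|_{H^{1/2}}^3)$. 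Inverting $A$ then yields $|Mod(t)|\lesssim\lambda^2+b^4+|\mathcal{P}|\|\epsilon\|_{L^2}+\|\epsilon\|_{L^2}^2+\|\epsilon\|_{H^{1/2}}^3$.

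The main obstacle is the improved estimate for $\frac{\lambda_s}{\lambda}+b$, which must gain a power of $b$ (from $b^4$ to $b^5$) while relaxing to $\|\epsilon\|_{H^{1/2}}^2$. This does not follow from the crude inversion; it relies on the precise construction of $\rho$ through \eqref{mod-definition-rho}, $L_+\rho_1=S_{1,0}$ and $L_-\rho_2=bS_{1,0}\rho_1+b\Lambda\rho_1-2bT_{2,0}$. The point is that the third orthogonality relation, when differentiated and tested against the system, is engineered so that the leading $b^4$-contribution to the $\frac{\lambda_s}{\lambda}+b$ equation cancels identically, using the solvability identities among $S_{1,0},T_{2,0},Q$ from Lemma \ref{lemma-3app}, in particular $(S_{1,0},S_{1,0})=-2(T_{2,0},Q)$ from \eqref{approximate-relation-in}. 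I would therefore isolate the $\frac{\lambda_s}{\lambda}+b$ equation from the $\rho$-projection, verify this cancellation explicitly, and thereby trade the $b^4$ for $b^5$. Tracking the exact algebra of $\rho$ against $\Phi_{\mathcal{P}}$ and confirming the cancellation is the delicate part of the argument.
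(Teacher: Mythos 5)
Your treatment of the first two assertions is sound and follows essentially the same route as the paper: the energy bound comes from combining mass conservation (which gives \eqref{mod-mass}), energy conservation, the expansion of $E(Q_{\mathcal{P}})$ from Lemma \ref{lemma-3app-2}, the degeneracy of the linear-in-$\epsilon$ term via the profile equation \eqref{equ-3approxiamte} and the orthogonality conditions \eqref{mod-orthogonality-condition}, and the coercivity of the linearized quadratic form with the bad directions $(\epsilon_1,Q)$, $(\epsilon_1,S_{1,0})$, $(\epsilon_2,\rho_1)$ killed by mass conservation and orthogonality; the crude bound on $Mod(t)$ comes from projecting \eqref{mod-equ-simple-1}--\eqref{mod-equ-simple-2} onto the three directions and inverting a near-constant $3\times3$ matrix, exactly as in Steps 3--8 of the paper's proof.

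The gap is in the improved bound for $\frac{\lambda_s}{\lambda}+b$, which you propose to extract from the third ($\rho$-)projection. That cannot work structurally. In the $\rho$-projection the unknown $\tilde{\gamma}_s$ carries the nondegenerate coefficient $(Q,\rho_1)+\mathcal{O}(\mathcal{P}^2)=2e_1+\mathcal{O}(\mathcal{P}^2)$ --- this is precisely what the design $L_+\rho_1=S_{1,0}$ achieves, via $L_+\Lambda Q=-Q$ --- whereas $\frac{\lambda_s}{\lambda}+b$ enters that equation only with an $\mathcal{O}(\mathcal{P})$ coefficient, since $\rho_2=\mathcal{O}(b)$ by \eqref{mod-definition-rho}. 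So the $\rho$-projection is the equation that determines $\tilde{\gamma}_s$ in terms of the other unknowns; it cannot be solved for $\frac{\lambda_s}{\lambda}+b$, and no amount of tracking "the algebra of $\rho$ against $\Phi_{\mathcal{P}}$" changes that. In the paper the gain comes instead from the \emph{second} projection, onto $(-\partial_bQ_{2\mathcal{P}},\partial_bQ_{1\mathcal{P}})$ (Step 6 of the paper's proof), where three things conspire: (i) the $(b_s+\frac{b^2}{2})$-contributions drop out, because the two projections produce the same symmetric pairing of $\partial_bQ_{1\mathcal{P}}$ with $\partial_bQ_{2\mathcal{P}}$ with opposite signs; (ii) the coefficient of $\tilde{\gamma}_s$ reduces to $b\left[(S_{1,0},S_{1,0})+2(Q,T_{2,0})\right]+\mathcal{O}(\mathcal{P}^2)=\mathcal{O}(\mathcal{P}^2)$, which is where the identity \eqref{approximate-relation-in} that you correctly singled out actually enters; (iii) the coefficient of $\frac{\lambda_s}{\lambda}+b$ is $2e_1+\mathcal{O}(\mathcal{P}^2)$, hence invertible. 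Moreover --- and this is the reason $b^4$ can be traded for $b^5$ at all --- the $\partial_b$-projection of the deformed linear terms is only $\mathcal{O}(\mathcal{P}^2\|\epsilon\|_{L^2})$ (estimate \eqref{mod-2-2}), in contrast with the $\Lambda Q_{\mathcal{P}}$-projection, whose right side contains $\Re(\epsilon,Q_{\mathcal{P}})$ and therefore, through \eqref{mod-mass}, an unavoidable $\mathcal{O}(b^4+\lambda^2)$; after these degeneracies the only non-$\epsilon$ error left in the $\partial_b$-projection is the pairing with $\Phi_{\mathcal{P}}$, of size $\mathcal{O}(b^5+\lambda^2|\mathcal{P}|)$ by \eqref{3app-regu-decay}. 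In short, you identified the right cancellation identity but attached it to the wrong projection; as written, your plan for the improved estimate fails at the inversion step, and the repair is to read the improved bound off the decoupled $\partial_bQ_{\mathcal{P}}$-equation rather than from the $\rho$-equation or from the full matrix inversion.
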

\begin{proof}
We divide the proof into the following steps.\\
\textbf{Step 1.} Conservation of $L^2$-norm and energy.

By the conservation of $L^2$-mass and lemma \ref{lemma-3app-2}, we find that
\begin{align}\notag
\int|u|^2=\int|Q_{\mathcal{P}}+\epsilon|^2=\int|Q|^2+2\Re(\epsilon,Q_{\mathcal{P}})+\int|\epsilon|^2+\mathcal{O}(\lambda^2+b^4).
\end{align}
By assumption \eqref{mod-mass-assume}, this implies
\begin{align}\label{mod-mass}
2\Re(\epsilon,Q_{\mathcal{P}})+\int |\epsilon|^2=\mathcal{O}(\lambda^2+b^4).
\end{align}
Next, we recall that $v=Q_{\mathcal{P}}+\epsilon$ and the assumed form of $u=u(t,x)$. We expanding the nonlinear term:
\begin{align*}
|v|^4=&|Q_{\mathcal{P}}|^4+4\Re(\epsilon\overline{|Q_{\mathcal{P}}|^2Q_{\mathcal{P}}})+4\Re(|\epsilon|^2\epsilon\bar{Q}_{\mathcal{P}})\\
&+2|Q_{\mathcal{P}}|^2\left[\left(1+\frac{2Q_{1\mathcal{P}}^2}{|Q_{\mathcal{P}}|^2}\right)\epsilon_1^2+4\frac{Q_{1\mathcal{P}}Q_{2\mathcal{P}}}{|Q_{\mathcal{P}}|^2}\epsilon_1\epsilon_2+\left(1+\frac{2Q_{2\mathcal{P}}^2}{|Q_{\mathcal{P}}|^2}\right)\epsilon_2^2\right].
\end{align*}
We now inject the value of $\tilde{E}(Q_{\mathcal{P}})$ given by lemma \ref{lemma-3app-2} and the estimate the cubic and higher nonlinear terms, using the Gagliardo-Nirenberg inequality and the priori smallness \eqref{mod-mass-assume} to derive:
\begin{align*}
&\frac{1}{2}\int|D^{\frac{1}{2}}(Q_{\mathcal{P}}+\epsilon)|^2-\frac{1}{4}\int k(\lambda y)|Q_{\mathcal{P}}+\epsilon|^4\\
=&\tilde{E}(Q_{\mathcal{P}})+\Re\left(\epsilon,\overline{DQ_{\mathcal{P}}-k(\lambda y)|Q_{\mathcal{P}}|^2Q_{\mathcal{P}}}\right)\\
&+\frac{1}{2}\int|D^{\frac{1}{2}}\epsilon|^2-\frac{1}{2}\int 2|Q_{\mathcal{P}}|^2\left[\left(1+\frac{2Q_{1\mathcal{P}}^2}{|Q_{\mathcal{P}}|^2}\right)\epsilon_1^2+4\frac{Q_{1\mathcal{P}}Q_{2\mathcal{P}}}{|Q_{\mathcal{P}}|^2}\epsilon_1\epsilon_2+\left(1+\frac{2Q_{2\mathcal{P}}^2}{|Q_{\mathcal{P}}|^2}\right)\epsilon_2^2\right]\\
&+\mathcal{O}(\|\epsilon\|_{H^{1/2}}^3+\mathcal{P}^2\|\epsilon\|_{H^{1/2}}^2)\\
=&e_1b^2-\frac{\lambda^2}{8}\int k^{\prime\prime}(0)y^2Q^4+\Re\left(\epsilon,\overline{DQ_{\mathcal{P}}-k(\lambda y)|Q_{\mathcal{P}}|^2Q_{\mathcal{P}}}\right)\\
&+\frac{1}{2}\int|D^{\frac{1}{2}}\epsilon|^2-\frac{1}{2}\int 2|Q_{\mathcal{P}}|^2\left[\left(1+\frac{2Q_{1\mathcal{P}}^2}{|Q_{\mathcal{P}}|^2}\right)\epsilon_1^2+4\frac{Q_{1\mathcal{P}}Q_{2\mathcal{P}}}{|Q_{\mathcal{P}}|^2}\epsilon_1\epsilon_2+\left(1+\frac{2Q_{2\mathcal{P}}^2}{|Q_{\mathcal{P}}|^2}\right)\epsilon_2^2\right]\\
&+\mathcal{O}(\|\epsilon\|_{H^{1/2}}^3+\mathcal{P}^4).
\end{align*}
Hence, we have
\begin{align*}
\lambda E_0=&e_1b^2-\frac{\lambda^2}{8}\int k^{\prime\prime}(0)y^2Q^4+\Re\left(\epsilon,\overline{DQ_{\mathcal{P}}-k(\lambda y)|Q_{\mathcal{P}}|^2Q_{\mathcal{P}}}\right)\\
&+\frac{1}{2}\int|D^{\frac{1}{2}}\epsilon|^2-\frac{1}{2}\int 2|Q_{\mathcal{P}}|^2\left[\left(1+\frac{2Q_{1\mathcal{P}}^2}{|Q_{\mathcal{P}}|^2}\right)\epsilon_1^2+4\frac{Q_{1\mathcal{P}}Q_{2\mathcal{P}}}{|Q_{\mathcal{P}}|^2}\epsilon_1\epsilon_2+\left(1+\frac{2Q_{2\mathcal{P}}^2}{|Q_{\mathcal{P}}|^2}\right)\epsilon_2^2\right]\\
&+\mathcal{O}(\|\epsilon\|_{H^{1/2}}^3+\mathcal{P}^4).
\end{align*}
\textbf{Step 2.} Coercivity of the linearized energy and the proof of \eqref{mod-1-energy-estimate}. Combining the conservation of mass \eqref{mod-mass} and the above energy conservation, we deduce that
\begin{align}\label{mod-1}
&b^2e_1+\Re\left(\epsilon,\overline{Q_{\mathcal{P}}+DQ_{\mathcal{P}}-k(\lambda y)|Q_{\mathcal{P}}|^2Q_{\mathcal{P}}}\right)+\frac{1}{2}\int|\epsilon|^2\notag\\
&+\frac{1}{2}\int|D^{\frac{1}{2}}\epsilon|^2-\frac{1}{2}\int 2|Q_{\mathcal{P}}|^2\left[\left(1+\frac{2Q_{1\mathcal{P}}^2}{|Q_{\mathcal{P}}|^2}\right)\epsilon_1^2+4\frac{Q_{1\mathcal{P}}Q_{2\mathcal{P}}}{|Q_{\mathcal{P}}|^2}\epsilon_1\epsilon_2+\left(1+\frac{2Q_{2\mathcal{P}}^2}{|Q_{\mathcal{P}}|^2}\right)\epsilon_2^2\right]\notag\\
=&\lambda E_0+\frac{\lambda^2}{8}\int k^{\prime\prime}(0)y^2Q^4+\mathcal{O}\left(\|\epsilon\|_{H^{1/2}}^3+\mathcal{P}^4+b^4+\lambda^2\right).
\end{align}
The remaining linear term is degenerate from \eqref{equ-3approxiamte}:
\begin{align*}
Q_{\mathcal{P}}+DQ_{\mathcal{P}}-k(\lambda(t)y)|Q_{\mathcal{P}}|^2Q_{\mathcal{P}}=-b\Lambda Q_{\mathcal{P}}+\mathcal{O}(\mathcal{P}^2),
\end{align*}
and thus using the orthogonality condition \eqref{mod-orthogonality-condition}, we have
\begin{align}\label{mod-2}
&\Re\left(\epsilon,\overline{Q_{\mathcal{P}}+DQ_{\mathcal{P}}-k(\lambda y)|Q_{\mathcal{P}}|^2Q_{\mathcal{P}}}\right)\notag\\
=&b\Im(\epsilon,\overline{\Lambda Q_{\mathcal{P}}})+\mathcal{O}(\mathcal{P}^2\|\epsilon\|_{L^2})=\mathcal{O}(\mathcal{P}^2\|\epsilon\|_{L^2}).
\end{align}
From \eqref{mod-1} and \eqref{mod-2}, we deduce that
\begin{align}\label{mod-3}
&b^2e_1+\frac{1}{2}\int|\epsilon|^2+\frac{1}{2}\int|D^{\frac{1}{2}}\epsilon|^2\notag\\
-&\frac{1}{2}\int 2|Q_{\mathcal{P}}|^2\left[\left(1+\frac{2Q_{1\mathcal{P}}^2}{|Q_{\mathcal{P}}|^2}\right)\epsilon_1^2+4\frac{Q_{1\mathcal{P}}Q_{2\mathcal{P}}}{|Q_{\mathcal{P}}|^2}\epsilon_1\epsilon_2+\left(1+\frac{2Q_{2\mathcal{P}}^2}{|Q_{\mathcal{P}}|^2}\right)\epsilon_2^2\right]\notag\\
=&\lambda E_0+\frac{\lambda^2}{8}\int k^{\prime\prime}(0)y^2Q^4+\mathcal{O}(\|\epsilon\|_{H^{1/2}}^3+\|\mathcal{P}^4+b^4+\lambda^2).
\end{align}
We now observe from the proximity of $Q_{\mathcal{P}}$ to $Q$ ensured by the priori smallness \eqref{mod-mass-assume} that the quadratic form in the left-hand side of \eqref{mod-3} is a small deformation of the linearized operator close to $Q$. Hence, we have
\begin{align}\label{mod-4}
&b^2e_1+\frac{1}{2}\left[(L_+\epsilon_1,\epsilon_1)+(L_-\epsilon_2,\epsilon_2)\right]\notag\\
=&\lambda E_0+\frac{\lambda^2}{8}\int k^{\prime\prime}(0)y^2Q^4+\mathcal{O}(\|\epsilon\|_{H^{1/2}}^3+b^4+\lambda^2)+o(\|\epsilon\|_{H^{1/2}}^2).
\end{align}
We now recall the following coercivity property of the linearized energy, which is a consequence of the variational characterization of $Q$,
\begin{align}\label{coercivity-estimate}
(L_+\epsilon_1,\epsilon_1)&+(L_-\epsilon_2,\epsilon_2)\geq C_0\|\epsilon\|_{H^{1/2}}
-\frac{1}{C_0}\{(\epsilon_1,Q)^2+(\epsilon_1,S_{1,0})^2+(\epsilon_2,\rho_1)^2\}.
\end{align}
with some constant $C_0>0$. By the similar argument as \cite{KLR2013,MerleR2006}, we can obtain the coercivity estimate. Note that the orthogonality conditions \eqref{mod-orthogonality-condition} imply that
\begin{align*}
    (\epsilon[_1,S_{1,0})^2=\mathcal{O}(\mathcal{P}^2\|\epsilon\|_{L^2}^2),\,\,(\epsilon_2,\rho_1)^2=\mathcal{O}(\mathcal{P}^2\|\epsilon\|_{L^2}^2).
\end{align*}
Furthermore, using the orthogonality conditions \eqref{mod-orthogonality-condition} together with the degeneracy inherited from \eqref{mod-mass}, we have
\begin{align*}
|(\epsilon_1,Q)|^2=o(\|\epsilon\|_{H^{1/2}})+\mathcal{O}(b^4+\lambda^2).
\end{align*}
Then combining these bounds with \eqref{coercivity-estimate}, we deduce that
\begin{align*}
(L_+\epsilon_1,\epsilon_1)&+(L_-\epsilon_2,\epsilon_2)\geq\frac{C_0}{2}\|\epsilon\|_{H^{1/2}}^2+\mathcal{O}(b^4+\lambda^2).
\end{align*}
Injecting this into \eqref{mod-4}, we can obtain \eqref{mod-1-energy-estimate}.
	
\textbf{Step~3}. Inner products. We compute the inner products needed to compute the law of the parameters from the $Q_{\mathcal{P}}$ equation \eqref{equ-3approxiamte}, where $M_1$ and $M_2$ are given by \eqref{mod-define-M1} and \eqref{mod-define-M2}, respectively. The following estimates hold.
\begin{align}\label{mod-2-1}
&(M_-(\epsilon)-b\Lambda \epsilon_1,\Lambda Q_{2\mathcal{P}})+(M_+(\epsilon)+b\Lambda\epsilon_2,\Lambda Q_{1\mathcal{P}})=-\Re(\epsilon,Q_{\mathcal{P}})+\mathcal{O}(\mathcal{P}^2\|\epsilon\|_{L^2}),\\\label{mod-2-2}
&(M_-(\epsilon)-b\Lambda \epsilon_1,\partial_b Q_{2\mathcal{P}})+(M_+(\epsilon)+b\Lambda\epsilon_2,\partial_b Q_{1\mathcal{P}})=\mathcal{O}(\mathcal{P}^2\|\epsilon\|_{L^2}),\\ \label{mod-2-3}
&(M_-(\epsilon)-b\Lambda \epsilon_1,\rho_2)+(M_+(\epsilon)+b\Lambda\epsilon_2,\rho_1)=\mathcal{O}(\mathcal{P}^2\|\epsilon\|_{L^2}).
\end{align}
To prove \eqref{mod-2-1}-\eqref{mod-2-3}, we can use the similar argument as \cite{Raphael2011-Jams,Rephael-2007-poincare,KLR2013}, here we omit the details.
	
\textbf{Step~4.} Simplification of the equations. To compute the modulation equations driving the geometrical parameters, we first  simplify the equations \eqref{mod-system-1} and \eqref{mod-system-2}. Using the explicit construction of $Q_{\mathcal{P}}$, we obtain
\begin{align}\label{mod-equ-simple-1}
&(b_s+\frac{1}{2}b^2)\partial_bQ_{1\mathcal{P}}+\partial_s\epsilon_1-M_{-}(\epsilon)+b\Lambda\epsilon_1\notag\\
=&\left(\frac{\lambda_s}{\lambda}+b\right)(\Lambda Q_{1\mathcal{P}}+\Lambda\epsilon_1)
+\tilde{\gamma}_s(Q_{2\mathcal{P}}+\epsilon_2)+\Im(\tilde{\Phi}_{\mathcal{P}})-R_2(\epsilon),\\\label{mod-equ-simple-2}
&(b_s+\frac{1}{2}b^2)\partial_bQ_{2\mathcal{P}}+\partial_s\epsilon_2-M_{+}(\epsilon)+b\Lambda\epsilon_2\notag\\
=&\left(\frac{\lambda_s}{\lambda}+b\right)(\Lambda Q_{2\mathcal{P}}+\Lambda\epsilon_2)
+\tilde{\gamma}_s(Q_{2\mathcal{P}}+\epsilon_1)-\Re(\tilde{\Phi}_{\mathcal{P}})+R_1(\epsilon),
\end{align}
where $M_+$ and $M_-$ defined by \eqref{mod-define-M1} and \eqref{mod-define-M2}, respectively, and the remainder term ${\Phi}_{\mathcal{P}}$  has the following from from lemma \ref{lemma-3app}.

\textbf{Step 5.} The law of $b$. We take the inner product of the equation \eqref{mod-equ-simple-1} of $\epsilon_1$ with $-\Lambda Q_{2\mathcal{P}}$ and we sum it with the inner product of equation \eqref{mod-equ-simple-2} of $\epsilon_{2}$ with $\Lambda Q_{1\mathcal{P}}$. We obtain after integrating by parts:
\begin{align*}
	-\left(b_s+\frac{b^2}{2}\right)\left((L_-S_{1,0},S_{1,0})+\mathcal{O}(\mathcal{P}^2)\right)=&\Re(\epsilon,Q_{\mathcal{P}})+(R_2(\epsilon),\Lambda Q_{1\mathcal{P}})+(R_1(\epsilon),\Lambda Q_{2\mathcal{P}})\notag\\
	&-(\Im(\tilde{\Phi}_{\mathcal{P}}),\Lambda Q_{2\mathcal{P}})+(\Re(\tilde{\Phi}_{\mathcal{P}}),\Lambda Q_{1\mathcal{P}})\notag\\
	&+\mathcal{O}\left((\mathcal{P}^2+|Mod(t)|)(\|\epsilon\|_{L^2}+\mathcal{P})\right).
\end{align*}
Now, by using that
\begin{align}
	2\Re(\epsilon,Q_{\mathcal{P}})=-\int |\epsilon|^2+\mathcal{O}(b^4+\lambda^2),
\end{align}
We deduce that
\begin{align*}
	-\left(b_s+\frac{b^2}{2}\right)\left((L_-S_{1,0},S_{1,0})+\mathcal{O}(\mathcal{P}^2)\right)
	=&-\int|\epsilon|^2+(R_2(\epsilon),\Lambda Q_{1\mathcal{P}})+(R_1(\epsilon),\Lambda Q_{2\mathcal{P}})\\
	&+\mathcal{O}\left((\mathcal{P}+|Mod(t)|)(\|\epsilon\|_{L^2}+\mathcal{P})+\lambda^2+b^4\right).
\end{align*}

\textbf{Step 6.} The law of $\lambda$. By projecting \eqref{mod-equ-simple-1} and \eqref{mod-equ-simple-2} onto $-\partial_bQ_{2\mathcal{P}}$ and $\partial_bQ_{1\mathcal{P}}$, respectively. We obtain
\begin{align*}
	\left(\frac{\lambda_s}{\lambda}+b\right)(2e_1+\mathcal{P}^2)=&(R_2(\epsilon),\partial_bQ_{1\mathcal{P}})+(R_1(\epsilon),\partial_bQ_{2\mathcal{P}})\\
	&+\mathcal{O}\left((\mathcal{P}^2+|Mod(t)|)(\|\epsilon\|_{L^2}+\mathcal{P}^2)+b^5\right).
\end{align*}
Here we used that $(Q_{2\mathcal{P}},\partial_bQ_{2\mathcal{P}})+(Q_{1\mathcal{P}},\partial_bQ_{1\mathcal{P}})=b(S_{1,0},S_{1,0})+2b(Q,T_{2,0})+\mathcal{O}(\mathcal{P}^2)=\mathcal{O}(\mathcal{P}^2)$, since $(S_{1,0},S_{1,0})+2(Q,T_{2,0})=0$.

\textbf{Step 7.} Law for $\tilde{\gamma}$.

We take the inner product of the equation \eqref{mod-equ-simple-1} of $\epsilon_1$ with $-\rho_2$ and we sum it with the inner product of equation \eqref{mod-equ-simple-2} of $\epsilon_{2}$ with $\rho_1$. We obtain after integrating by parts:
\begin{align*}
	\tilde{\gamma_s}\left((Q,\rho_1)+\mathcal{O}(\mathcal{P}^2)\right)=&-\left(b_s+\frac{b^2}{2}\right)\left((S_{1,0},\rho_1)+\mathcal{O}(\mathcal{P}^2)\right)+\left(\frac{\lambda_s}{\lambda}+b\right)(\mathcal{O}(\mathcal{P}))\\
	&+(R_2(\epsilon),\rho_1)+(R_1(\epsilon),\rho_2)+\mathcal{O}\left((\mathcal{P}^2+|Mod(t)|)\|\epsilon\|_{L^2}+b^5\right).
\end{align*}
Note that $(Q,\rho_1)=(L_-S_{1,0},S_{1,0})=2e_1$, which follows from $L_+\Lambda Q=-Q$ and the definition of $\rho_1$.

\textbf{Step 8.} Conclusion. Putting together Step 5, 6, 7 and estimate the nonlinear interaction terms in  $\epsilon$ by using Sobolev embedding yields:
\begin{align}
	(A+B)Mod(t)=\mathcal{O}\left((\mathcal{P}^2+|Mod(t)|)\|\epsilon\|_{L^2}+\|\epsilon\|_{L^2}^2+\|\epsilon\|_{H^{1/2}}^3+\lambda^2+b^4\right).
\end{align}
Here $A=\mathcal{O}(1)$ is in invertible $3\times3$ matrix, whereas $B=\mathcal{O}(\mathcal{P})$ is some $3\times3$ matrix that is polynomial in $\mathcal{P}=(b,\lambda)$. For $|\mathcal{P}|\ll1$, we can thus invert $A+B$ by Taylor expansion and derive the estimate for $Mod(t)$ stated in this lemma.

Finally, we deduce the improved bound for $\left|\frac{\lambda_s}{\lambda}+b\right|$, by recalling the estimate derived in Step 6.
Now we complete the proof of this lemma.
\end{proof}

\section{Refined Energy bounds}\label{section-refined-energy}
In this section, we establish a refined energy estimate, which will be a key ingredient in the compactness argument to construct ground state mass blowup solutions.
	
Let $u=u(t,x)$ be a solution \eqref{equ-1-hf-in} on the time interval $[t_0,0)$ and suppose that $\tilde{Q}$ is an approximate solution to \eqref{equ-1-hf-in} such that
\begin{equation}\label{equ-refine-approximate-1}
i\tilde{Q}_t-D\tilde{Q}+k(x)|\tilde{Q}|^2\tilde{Q}=\psi,
\end{equation}
with the priori bounds
\begin{align}\label{energy-priori-1-in}
\|\tilde{Q}\|_2\lesssim 1,\ \|D^{\frac{1}{2}}\tilde{Q}\|_2\lesssim \frac{1}{\lambda^{\frac{1}{2}}},\ \|\nabla \tilde{Q}\|_2\lesssim \frac{1}{\lambda}.
\end{align}
We then decompose $u=\tilde{Q}+\tilde{\epsilon}$, and hence $\tilde{\epsilon}$ satisfies
\begin{equation}\label{equ-app2-hf-in}
i\tilde{\epsilon}_t-D\tilde{\epsilon}+k(x)(|u|^2u-|\tilde{Q}|^2\tilde{Q})=-\psi,
\end{equation}
and we assume the priori bounds
\begin{align}\label{energy-priori-2-in}
\|D^{\frac{1}{2}+\delta}\tilde{\epsilon}\|_2\lesssim1,\,\,\|D^{\frac{1}{2}}\tilde{\epsilon}\|_{L^2}\lesssim\lambda^{\frac{1}{2}},\,\,\|\tilde{\epsilon}\|_2\lesssim \lambda,
\end{align}
as well as
\begin{align}\label{energy-priori-3-in}
|\lambda_t+b|\lesssim\lambda^2,\,\, b\lesssim\lambda^{\frac{1}{2}},\,\, |b_t|\lesssim1.
\end{align}
We let $A>0$ be a large enough constant, which will be chosen later, and let
$\phi:\mathbb{R}\rightarrow\mathbb{R}$ be a smooth and even cutoff function with
\begin{align}\label{energy-cutoff-function}
	\phi^\prime(x)=\begin{cases}x\ \ &\text{for}\ \ 0\leq x\leq1,\\
			3-e^{-|x|}\ &\text{for}\ x\geq2,
	\end{cases}
\end{align}
and the convexity condition
\begin{align}
	\phi^{\prime\prime}(x)\geq0\ \text{for}\ x\geq0.
\end{align}
Furthermore, we denote
\begin{align}\notag
F(u)=\frac{1}{4}|u|^{4},\ f(u)=|u|^2u,\ F'(u)\cdot h=\Re(f(u)\bar{h}).
\end{align}	
We have the following generalized energy estimate.
\begin{lemma}\label{lemma-refine-energy-in}(\textbf{Localized energy/virial estimate}.) Let
\begin{align}\label{refine-energy-def}
J_A(u):=&\frac{1}{2}\int|D^{\frac{1}{2}}\tilde{\epsilon}|^2
+\frac{1}{2}\int\frac{|\tilde{\epsilon}|^2}{\lambda}-\int k(x)[F(u)-F(\tilde{Q})-F'(\tilde{Q})\cdot\tilde{\epsilon}]\notag\\
&+\frac{b}{2}\Im\left(\int A\nabla\phi\left(\frac{x}{A\lambda}\right)\cdot\nabla\tilde{\epsilon}\bar{\tilde{\epsilon}}\right).
\end{align}
Then the following holds:
\begin{align}\label{refine-energy-estimate}
\frac{d J_{A}}{dt}=&-\frac{1}{\lambda}\Im\left(\int k(x)w^2\bar{\tilde{\epsilon}}^2 \right)-\Re\left(\int k(x)\tilde{Q}_t\overline{(2|\tilde{\epsilon}|^2\tilde{Q}+\tilde{\epsilon}^2\bar{\tilde{Q}}})\right)\notag\\
&+\frac{b}{2\lambda}\int\frac{|\tilde{\epsilon}|^2}{\lambda}+\frac{b}{2\lambda}\int_{s=0}^{+\infty}\sqrt{s}\int \Delta\phi\left(\frac{x}{A\lambda}\right)|\nabla \tilde{\epsilon}_s|^2dxds\notag\\
&-\frac{1}{8}\frac{b}{A^2\lambda^3}\int_{s=0}^{\infty}\sqrt{s}\int \Delta^2\phi\left(\frac{x}{A\lambda}\right)|\tilde{\epsilon}_s|^2dxds\notag\\
&+b\Re\left(\int A\nabla\phi\left(\frac{x}{A\lambda}\right)k(x)(2|\tilde{\epsilon}|^2\tilde{Q}+\tilde{\epsilon}^2\bar{\tilde{Q}})\cdot\overline{\nabla \tilde{Q}}\right)\notag\\
&+\Im\Bigg(\int \bigg[-D\psi-\frac{\psi}{\lambda}+k(x)(2|\tilde{Q}|^2\psi-\tilde{Q}^2\bar{\psi})+ibA\nabla\phi\left(\frac{x}{A\lambda}\right)\cdot\nabla\psi\notag\\
&+i\frac{b}{2\lambda}\Delta\phi\left(\frac{x}{A\lambda}\right)\psi\bigg]\bar{\tilde{\epsilon}}\Bigg)\notag\\
&+\mathcal{O}\left(\lambda\|\psi\|_{L^2}^2+\lambda^{-1}\|\tilde{\epsilon}\|_{L^2}+\log^{\frac{1}{2}}\left(2+\|\tilde{\epsilon}\|_{H^{1/2}}^{-1}\right)\|\tilde{\epsilon}\|_{H^{1/2}}^2\right).
\end{align}
Here we denote $\tilde{\epsilon}_s=\sqrt{\frac{2}{\pi}}\frac{1}{-\Delta+s}\tilde{\epsilon}$ with $s>0$.
\end{lemma}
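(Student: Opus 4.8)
The plan is to compute $\frac{d}{dt}J_A(u)$ by differentiating each of the four constituent terms of \eqref{refine-energy-def} and substituting the evolution equations \eqref{equ-refine-approximate-1} and \eqref{equ-app2-hf-in} wherever $\tilde{Q}_t$ or $\tilde{\epsilon}_t$ appears. For the renormalized mass term $\frac12\int|\tilde\epsilon|^2/\lambda$ I would first separate the contribution of the moving weight $1/\lambda$, which, using $\lambda_t=-b+\mathcal{O}(\lambda^2)$ from \eqref{energy-priori-3-in}, produces the term $\frac{b}{2\lambda}\int|\tilde\epsilon|^2/\lambda$, and then use $i\tilde\epsilon_t=D\tilde\epsilon-k(x)(|u|^2u-|\tilde Q|^2\tilde Q)-\psi$ together with the self-adjointness of $D$ (so that $\Im\int D\tilde\epsilon\,\bar{\tilde\epsilon}=0$) to extract the leading imaginary interaction, namely the first right-hand term $-\frac1\lambda\Im\int k\,w^2\bar{\tilde\epsilon}^2$ with $w=\tilde Q$ (coming from the linear-in-$\tilde\epsilon$ part $\tilde Q^2\bar{\tilde\epsilon}$ of the nonlinearity), and the $\psi$-source $-\frac1\lambda\Im\int\psi\bar{\tilde\epsilon}$, deferring the genuinely cubic remainders to the error. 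Differentiating the nonlinear functional $-\int k[F(u)-F(\tilde Q)-F'(\tilde Q)\cdot\tilde\epsilon]$ and isolating its explicit $\tilde Q_t$-dependence yields the term $-\Re\int k\,\tilde Q_t\overline{(2|\tilde\epsilon|^2\tilde Q+\tilde\epsilon^2\bar{\tilde Q})}$.

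The decisive step is the joint treatment of the virial term $\frac b2\Im\int A\nabla\phi(x/(A\lambda))\cdot\nabla\tilde\epsilon\,\bar{\tilde\epsilon}$ and the kinetic term $\frac12\int|D^{1/2}\tilde\epsilon|^2$, since here the commutator between the nonlocal operator $D=\sqrt{-\Delta}$ and the virial multiplier $A\nabla\phi(x/(A\lambda))\cdot\nabla$ must be computed. Unlike the local Schr\"odinger case, where this commutator is a differential operator producing the Morawetz density directly, here I would invoke the subordination identity $\int|D^{1/2}\tilde\epsilon|^2=\int_0^\infty\sqrt s\int|\nabla\tilde\epsilon_s|^2\,dx\,ds$, valid for $\tilde\epsilon_s=\sqrt{2/\pi}\,(-\Delta+s)^{-1}\tilde\epsilon$, which rewrites the nonlocal kinetic energy as a superposition of local ones. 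Commuting the virial multiplier through each resolvent $(-\Delta+s)^{-1}$ reduces the computation to classical local virial identities, and integrating these against $\sqrt s\,ds$ produces precisely the two displayed terms $\frac{b}{2\lambda}\int_0^\infty\sqrt s\int\Delta\phi(x/(A\lambda))|\nabla\tilde\epsilon_s|^2$ and the biharmonic correction $-\frac18\frac{b}{A^2\lambda^3}\int_0^\infty\sqrt s\int\Delta^2\phi(x/(A\lambda))|\tilde\epsilon_s|^2$, the latter arising from the second-order variation of $\phi$ at scale $A\lambda$. The $\psi$-contributions of the virial term assemble into the bracket $ibA\nabla\phi(x/(A\lambda))\cdot\nabla\psi+i\frac{b}{2\lambda}\Delta\phi(x/(A\lambda))\psi$, while its nonlinear interaction with $\nabla\tilde Q$ gives $b\Re\int A\nabla\phi(x/(A\lambda))k(x)(2|\tilde\epsilon|^2\tilde Q+\tilde\epsilon^2\bar{\tilde Q})\cdot\overline{\nabla\tilde Q}$. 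This subordination reduction, following \cite{KLR2013}, is the heart of the argument and the place where the half-wave structure genuinely departs from NLS.

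Finally I would collect every remaining contribution and bound it using the priori estimates \eqref{energy-priori-1-in}--\eqref{energy-priori-3-in}, the support and size properties of $\phi',\phi'',\Delta\phi,\Delta^2\phi$ at scale $A\lambda$, and the Gagliardo--Nirenberg and Sobolev inequalities in $H^{1/2}$. The cubic and quartic self-interactions of $\tilde\epsilon$ are the source of the endpoint factor $\log^{1/2}(2+\|\tilde\epsilon\|_{H^{1/2}}^{-1})\|\tilde\epsilon\|_{H^{1/2}}^2$, the logarithmic loss being the signature of the critical $H^{1/2}$ regularity; the remaining errors $\lambda\|\psi\|_{L^2}^2$ and $\lambda^{-1}\|\tilde\epsilon\|_{L^2}$ come respectively from the $\psi$-source pairings and from the variation of the weights and of $k(x)$. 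I expect the main obstacle to be twofold: ensuring that the $s$-integrals over the resolvent family $\tilde\epsilon_s$ remain convergent and correctly weighted by $\sqrt s$ (which is exactly what forces the logarithmic endpoint correction at high frequency), and verifying that the $b$- and $\lambda$-dependent, non-self-adjoint corrections to the linearized operator contribute no hidden leading-order terms. Once these cancellations are checked and the errors assembled, the identity \eqref{refine-energy-estimate} follows.
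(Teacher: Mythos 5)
Your proposal is correct and follows essentially the same route as the paper's proof: differentiate the energy part and the localized virial part separately, substitute the evolution equations for $\tilde{Q}$ and $\tilde{\epsilon}$, and handle the commutator of $D$ with the virial multiplier via the resolvent/subordination representation $\tilde{\epsilon}_s=\sqrt{2/\pi}\,(-\Delta+s)^{-1}\tilde{\epsilon}$, exactly the reduction of \cite{KLR2013} to which the paper also defers this computation. Your accounting of where each displayed term originates (the mass-weight variation giving $\frac{b}{2\lambda}\int|\tilde{\epsilon}|^2/\lambda$, the linear part $\tilde{Q}^2\bar{\tilde{\epsilon}}$ of the nonlinearity giving the $w^2\bar{\tilde{\epsilon}}^2$ term, the $\psi$-pairings, and the logarithmic loss from the critical-regularity nonlinear terms) matches the paper's Steps 1--2.
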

\begin{proof}
To prove this lemma, we can use the similar arguments that can be found  \cite{KLR2013,Raphael2011-Jams}. For the reader's convenience,  we  provide the details of the adaption to our case.
		
\textbf{Step 1.} Algebraic derivation of the energy part. We compute from \eqref{equ-app2-hf-in}:
\begin{align}\label{energy-part}
&\frac{d}{dt}\left\{\frac{1}{2}\int|D^{\frac{1}{2}}\tilde{\epsilon}|^2+\frac{1}{2}\int\frac{|\tilde{\epsilon}|^2}{\lambda}-\int k(x)[F(u)-F(\tilde{Q})-F'(\tilde{Q})\cdot\tilde{\epsilon}]\right\}\notag\\
=&\Re\left(\partial_t\tilde{\epsilon},\overline{D\tilde{\epsilon}+\frac{1}{\lambda}\tilde{\epsilon}-k(x)(f(u)-f(\tilde{Q}))}\right)-\frac{\lambda_t}{2\lambda^2}\int|\tilde{\epsilon}|^2\notag\\
&-\Re\left(\partial_t\tilde{Q},k(x)\overline{(f(\tilde{Q}+\tilde{\epsilon})-f(\tilde{\epsilon})-f^{\prime}(\tilde{Q})\cdot\tilde{\epsilon})}\right)\notag\\
=&-\Im\left(\psi,\overline{D\tilde{\epsilon}+\frac{1}{\lambda}\tilde{\epsilon}-k(x)(f(u)-f(\tilde{Q}))}\right)-\frac{1}{\lambda}\Im\left(k(x)(f(u)-f(\tilde{Q})),\bar{\tilde{\epsilon}}\right)\notag\\
&-\frac{\lambda_t}{2\lambda^2}\int|\tilde{\epsilon}|^2-\Re\left(\partial_t\tilde{Q},k(x)\overline{(f(\tilde{Q}+\tilde{\epsilon})-f(\tilde{\epsilon})-f^{\prime}(\tilde{Q})\cdot\tilde{\epsilon})}\right)\notag\\
=&-\Im\left(\psi,\overline{D\tilde{\epsilon}+\frac{1}{\lambda}\tilde{\epsilon}-k(x)(2|\tilde{Q}|^2\tilde{\epsilon}-\tilde{Q}^2\bar{\tilde{\epsilon}})}\right)-\frac{1}{\lambda}\Im\int k(x)\tilde{Q}^2\bar{\tilde{\epsilon}}^2-\frac{\lambda_t}{2\lambda^2}\int|\tilde{\epsilon}|^2\notag\\
&-\Re\left(\partial_t\tilde{Q},k(x)\overline{(\bar{\tilde{Q}}\tilde{\epsilon}^2+2\tilde{Q}|\tilde{\epsilon}|^2)}\right)-\Re\left(\partial_t\tilde{Q},k(x)|\tilde{\epsilon}|^2\tilde{\epsilon}\right)\notag\\
&-\Im\left(\psi-\frac{1}{\lambda}\tilde{\epsilon},k(x)\overline{(f(\tilde{Q}+\tilde{\epsilon})-f(\tilde{Q})-f^{\prime}(\tilde{Q})\cdot\tilde{\epsilon})}\right),
\end{align}
where $f^{\prime}(\tilde{Q})\cdot\tilde{\epsilon}=2|\tilde{Q}|^2\tilde{\epsilon}+\tilde{Q}^2\bar{\tilde{\epsilon}}$.
Using \eqref{energy-priori-3-in}, we have
\begin{align}\label{energy-part-1}
-\frac{\lambda_t}{2\lambda^2}\int|\tilde{\epsilon}|^2=\frac{b}{2\lambda}\int\frac{|\tilde{\epsilon}|^2}{\lambda}+\mathcal{O}\left(\|\tilde{\epsilon}\|_{H^{1/2}}^2\right).
\end{align}
Next, from \eqref{energy-priori-1-in}, \eqref{energy-priori-2-in} and $k(x)$ is bounded, we deduce
\begin{align}\label{energy-part-2}
&\left|\Im\left(\psi-\frac{1}{\lambda}\tilde{\epsilon},k(x)\overline{(f(\tilde{Q}+\tilde{\epsilon})-f(\tilde{Q})-f^{\prime}(\tilde{Q})\cdot\tilde{\epsilon})}\right)\right|\notag\\
\lesssim&\left|\Im\left(\psi-\frac{1}{\lambda}\tilde{\epsilon},k(x)\overline{(\tilde{\epsilon}^2\bar{\tilde{Q}}+2|\tilde{\epsilon}|^2\tilde{Q}+|\tilde{\epsilon}|^2\tilde{\epsilon})}\right)\right|\notag\\
\lesssim&\left(\|\psi\|_{L^2}+\frac{1}{\lambda}\|\tilde{\epsilon}\|_{L^2}\right)\|\tilde{\epsilon}\|_{L^6}^2(\|\tilde{Q}\|_{L^6}+\|\tilde{\epsilon}\|_{L^6})\notag\\
\lesssim&\lambda\|\psi\|_{L^2}+\frac{1}{\lambda}\|\tilde{\epsilon}\|_{L^2}^2+\|\tilde{\epsilon}\|_{H^{1/2}}^2.
\end{align}
Here we used the priori bounds \eqref{energy-priori-1-in} and \eqref{energy-priori-2-in}, the inhomogeneous factor $k(x)$ is bounded and the interpolation inequality $\|f\|_{L^6}\lesssim\|f\|_{\dot{H}^{1/2}}^{\frac{2}{3}}\|f\|_{L^2}^{\frac{1}{3}}$ in $\mathbb{R}$.
		
For the cubic term hitting $\partial_t\tilde{Q}$, we replace $\partial_t\tilde{Q}$ using \eqref{equ-app2-hf-in}, integrate by parts and then rely on \eqref{energy-priori-1-in} to estimate
\begin{align}\label{energy-part-3}
\left|\int k(x)\partial_t|\tilde{\epsilon}|^2\bar{\tilde{\epsilon}}\right|\lesssim&\|\tilde{Q}\|_{\dot{H}^{3/4}}\||\tilde{\epsilon}|^2\tilde{\epsilon}\|_{\dot{H}^{1/4}}+\|\tilde{\epsilon}\|_{L^6}^3\|\tilde{\epsilon}\|_{L^6}^3+\|\psi\|_{L^2}\|\tilde{\epsilon}\|_{L^6}^3\notag\\
\lesssim&\frac{1}{\lambda^{3/4}}\|\tilde{\epsilon}\|_{L^2}^{\frac{1}{2}}\|\tilde{\epsilon}\|_{\dot{H}^{1/2}}^{\frac{5}{2}}+\frac{1}{\lambda}\|\tilde{\epsilon}\|_{L^2}\|\tilde{\epsilon}\|_{\dot{H}^{1/2}}^2+\|\psi\|_{L^2}\|\tilde{\epsilon}\|_{L^2}\|\tilde{\epsilon}\|_{\dot{H}^{1/2}}^2\notag\\
\lesssim&\|\tilde{\epsilon}\|_{H^{1/2}}^2+\lambda\|\psi\|_{L^2}.
\end{align}
Here we used the Sobolev inequality, interpolation inequality and the fractional chain rule $\|D^sF(u)\|_{L^p}\lesssim\|F^{\prime}(u)\|_{L^{p_1}}\|D^su\|_{L^{p_2}}$ for any $F\in C^1$ with $0<s\leq1$ and $1<p,p_1,p_1<\infty$ such that $\frac{1}{p}=\frac{1}{p_1}+\frac{1}{p_2}$.
		
We now insert \eqref{energy-part-1}, \eqref{energy-part-2} and \eqref{energy-part-3} into \eqref{energy-part}, we have
\begin{align}
&\frac{d}{dt}\left\{\frac{1}{2}\int|D^{\frac{1}{2}}\tilde{\epsilon}|^2+\frac{1}{2}\int\frac{|\tilde{\epsilon}|^2}{\lambda}-\int k(x)[F(u)-F(\tilde{Q})-F'(\tilde{Q})\cdot\tilde{\epsilon}]\right\}\notag\\
=&\frac{1}{\lambda}\Im\left(\int k(x)\tilde{Q}^2\bar{\tilde{\epsilon}}^2\right)-\Re\left(\partial_t\tilde{Q}k(x)\overline{(2|\tilde{\epsilon}|^2\tilde{Q}+\tilde{\epsilon}^2\bar{\tilde{Q}})}\right)+\frac{b}{2\lambda}\int\frac{|\tilde{\epsilon}|^2}{\lambda}\notag\\
&+\Im\left(\int\left(-D\psi-\frac{\psi}{\lambda}+k(x)(2|\tilde{Q}|^2\psi-\tilde{Q}^2\bar{\psi})\right)\bar{\tilde{\epsilon}}\right)\notag\\
&+\mathcal{O}\left(\lambda\|\psi\|_{L^2}^2+\lambda^{-1}\|\epsilon\|_{L^2}^2+\|\tilde{\epsilon}\|_{H^{1/2}}^2\right).
\end{align}
		
\textbf{Step 2.} Algebraic derivation of the localized Virial part .Let
\begin{align*}
\nabla\tilde{\phi}(t,x)=bA\nabla\phi\left(\frac{x}{A\lambda}\right).
\end{align*}
Then
\begin{align}\label{virial-part}
&\frac{1}{2}\frac{d}{dt}\left(b\Im\int A\nabla\phi\left(\frac{x}{\lambda}\right)\nabla\tilde{\epsilon}\bar{\tilde{\epsilon}}\right)\notag\\
=&\frac{1}{2}\Im\left(\int\partial_t\nabla\tilde{\phi}\cdot\nabla\tilde{\epsilon}\bar{\tilde{\epsilon}}\right)+\frac{1}{2}\Im\left(\int \nabla\tilde{\phi}\cdot\left((\nabla\partial_t\tilde{\epsilon})\bar{\tilde{\epsilon}}+\nabla\tilde{\epsilon}\partial_t\bar{\tilde{\epsilon}}\right)\right).
\end{align}
Using the bounds \eqref{energy-priori-3-in}, we estimate
\begin{align}\label{virial-part-1}
\left|\partial_t\nabla\tilde{\phi}\right|\lesssim|b_t|+b\left|\frac{\lambda_t}{\lambda}\right|\lesssim1.
\end{align}
Hence, by using \cite[lemma F.1]{KLR2013}, we deduce that
\begin{align}\label{virial-part-2}
\left|\frac{1}{2}\Im\left(\int\partial_t\nabla\tilde{\phi}\cdot\nabla\tilde{\epsilon}\bar{\tilde{\epsilon}}\right)\right|\lesssim\|\tilde{\epsilon}\|_{\dot{H}^{1/2}}^2+\frac{\|\tilde{\epsilon}\|_{L^2}^2}{\lambda}.
\end{align}
Now we turn to the second terms in \eqref{virial-part} containing the time derivative of $\tilde{\epsilon}$. Using \eqref{equ-app2-hf-in} and $D=D*$ is self-adjoint, a calculation yileds that
\begin{align}
	&\frac{1}{2}\Im\left(\int \nabla\tilde{\phi}\cdot\left((\nabla\partial_t\tilde{\epsilon})\bar{\tilde{\epsilon}}+\nabla\tilde{\epsilon}\partial_t\bar{\tilde{\epsilon}}\right)\right)\notag\\
	=&-\frac{1}{4}\Re\left(\int \bar{\tilde{\epsilon}}\left[-i|-i\nabla|,\nabla\tilde{\phi}\cdot(-i\nabla)+(-i\nabla)\cdot\nabla\tilde{\phi}\right]\tilde{\epsilon}\right)\notag\\
	&-b\Re\left(\int k(x)(|u|^2u-|\tilde{Q}|^2\tilde{Q})\nabla\phi\left(\frac{x}{A\lambda}\right)\cdot\overline{\nabla\tilde{\epsilon}}\right)\notag\\
	&-\frac{1}{2}\frac{b}{\lambda}\Re\left(\int k(x)(|u|^2u-|\tilde{Q}|^2\tilde{Q})\Delta\left(\frac{x}{A\lambda}\right)|\tilde{\epsilon}|^2\right)\notag\\
	&-b\Re\left(\psi\nabla\phi\left(\frac{x}{A\lambda}\right)\cdot\nabla\overline{\tilde{\epsilon}}\right)-\frac{1}{2}\frac{b}{\lambda}\Re\left(\int\psi\Delta\left(\frac{x}{A\lambda}\right)\bar{\tilde{\epsilon}}\right).
\end{align}
By the similar argument as \cite{KLR2013}, we can obtain
\begin{align*}
	&\frac{1}{2}\Im\left(\int \nabla\tilde{\phi}\cdot\left((\nabla\partial_t\tilde{\epsilon})\bar{\tilde{\epsilon}}+\nabla\tilde{\epsilon}\partial_t\bar{\tilde{\epsilon}}\right)\right)\notag\\
	=&\frac{b}{2\lambda}\int_{0}^{+\infty}\sqrt{s}\int_{\mathbb{R}}\Delta\left(\frac{x}{A\lambda}\right)|\nabla\tilde{\epsilon}_s|^2dxds-\frac{1}{8}\frac{b}{A^2\lambda^3}\int_{0}^{+\infty}\int_{\mathbb{R}}\Delta^2\Phi\left(\frac{x}{A\lambda}\right)|\tilde{\epsilon}_s|^2dxds\notag\\
	&+b\Re\left(\int A\nabla\phi\left(\frac{x}{A\lambda}\right)k(x)(2|\tilde{\epsilon}_s|^2\tilde{Q}+\tilde{\epsilon}^2\bar{\tilde{Q}})\cdot\overline{\nabla \tilde{Q}}\right)\notag\\
	&+\Im\left(\left[ibA\nabla\phi\left(\frac{x}{A\lambda}\right)\cdot\nabla\psi+i\frac{b}{\lambda}\Delta\phi\left(\frac{x}{A\lambda}\right)\psi\right]\bar{\tilde{\epsilon}}\right)\notag\\
	&+\mathcal{O}\left(\log^{\frac{1}{2}}\left(2+\|\epsilon\|_{H^{1/2}}^2\right)\|\tilde{\epsilon}\|_{H^{1/2}}^2\right),
\end{align*}
where $\epsilon_s$ satisfies equation $-\Delta \tilde{\epsilon}_s+s\tilde{\epsilon}=\sqrt{\frac{2}{\pi}}\tilde{\epsilon}$ with $s>0$. This completes the proof of this lemma.
\end{proof}

\section{Backwards Propagation of Smallness}
We now apply the energy estimate of the previous section in order to establish a bootstrap argument that will be needed in the construction of ground state mass blowup solution.
	
Let $u=u(t,x)$ be a solution to \eqref{equ-1-hf-in} defined in $[t_0,0)$. Assume that $t_0<t_1<0$ and suppose that $u$ admits on $[t_0,t_1]$ a geometrical decomposition of the form
\begin{align}\label{back-decomposition}
u(t,x)=\frac{1}{\lambda^{\frac{1}{2}}(t)}[Q_{\mathcal{P}(t)}+\epsilon]
\left(t,\frac{x}{\lambda(t)}\right)e^{i\gamma(t)},
\end{align}
where $\epsilon=\epsilon_1+i\epsilon_2$ satisfies the orthogonality condition \eqref{mod-orthogonality-condition} and $|\mathcal{P}|+\|\epsilon\|_{H^{1/2}}^2\ll1$ holds. We set
\begin{align}\label{back-small-part}
\tilde{\epsilon}(t,x)=\frac{1}{\lambda^{\frac{1}{2}}(t)}\epsilon\left(s,\frac{x}{\lambda(t)}\right)e^{i\gamma(t)}.
\end{align}
Suppose that the energy satisfies $E_0=E(u_0)>0$ and define the constant
\begin{align}\label{back-define-1}
	A_0=\sqrt{\frac{e_1}{E_0}},
\end{align}
with the constant $e_1=\frac{1}{2}(L_{-}S_{1,0},S_{1,0})>0$.
	
Now we claim that the following backwards propagation estimate holds.
\begin{lemma}(\textbf{Backwards propagation of smallness})\label{lemma-back}
Assume that, for some $t_1<0$ sufficiently close to $0$, and some $\delta\in(0,\frac{1}{4})$ fixed. We have the bounds
\begin{align*}
&\left|\|u\|_2^2-\|Q\|_2^2\right|\lesssim\lambda^2(t_1),\\
&\|D^{\frac{1}{2}}\tilde{\epsilon}(t_1)\|_2^2+\frac{\|\tilde{\epsilon}\|_2^2}{\lambda(t_1)}\lesssim\lambda(t_1),\,\,\|D^{\frac{1}{2}+\delta}\tilde{\epsilon}(t_1)\|_{L^2}^2\lesssim\lambda^{\frac{1}{2}-2\delta}(t_1),\\
&\left|\lambda(t_1)-\frac{t_1^2}{4A_0^2}\right|\lesssim\lambda^{3}(t_1),\,\,\left|\frac{b(t_1)}{\lambda^{\frac{1}{2}}(t_1)}-\frac{1}{A_0}\right|\lesssim\lambda(t_1),
\end{align*}
where $A_0$  defined in \eqref{back-define-1}. Then there exists a time $t_0<t_1$ depending on $A_0$ such that $\forall t\in[t_0,t_1]$, it holds that
\begin{align*}
&\|D^{\frac{1}{2}}\tilde{\epsilon}(t)\|_2^2+\frac{\|\tilde{\epsilon}\|_2^2}{\lambda(t)}\lesssim\|D^{\frac{1}{2}}\tilde{\epsilon}(t_1)\|_2^2+\frac{\|\tilde{\epsilon}\|_2^2}{\lambda(t_1)}\lesssim\lambda^2(t),\\
&\|D^{\frac{1}{2}+\delta}\tilde{\epsilon}(t)\|_{L^2}^2\lesssim\lambda^{\frac{1}{2}-2\delta}(t),\\
&\left|\lambda(t)-\frac{t^2}{4A_0^2}\right|\lesssim\lambda^{3}(t),\,\,\left|\frac{b(t)}{\lambda^{\frac{1}{2}}(t)}-\frac{1}{A_0}\right|\lesssim\lambda(t).
\end{align*}
\end{lemma}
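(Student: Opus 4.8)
The plan is to run a continuity (bootstrap) argument on $[t_0,t_1]$ in which the remainder $\tilde\epsilon$ and the parameters $(b,\lambda)$ are controlled simultaneously, the engine being the almost-monotonicity of the functional $J_{A_0}$ from Lemma \ref{lemma-refine-energy-in}. Concretely, I would fix a large constant and let $t_*\in[t_0,t_1]$ be the infimum of all $\tau$ for which the four estimates claimed in the conclusion hold on $[\tau,t_1]$ with the implicit constants doubled. All quantities being continuous in $t$ and the hypotheses providing the bounds at $t=t_1$, it suffices to prove that on $[t_*,t_1]$ these bootstrap bounds self-improve (i.e.\ hold with the original constants); continuity then forces $t_*$ down to the prescribed $t_0$, which is chosen so that $\lambda$ stays small throughout.

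The first substep is the ODE analysis of the modulation parameters. From the refined modulation estimates of Lemma \ref{lemma-mod-1} one has $b_s=-\tfrac12 b^2+O(\lambda^2+b^4+\|\epsilon\|_{L^2}^2+\cdots)$ and the improved law $\tfrac{\lambda_s}{\lambda}=-b+O(b^5+\cdots)$, together with $\tfrac{ds}{dt}=\tfrac1\lambda$. Integrating the leading system $b_s=-\tfrac12 b^2$, $\lambda_s=-\lambda b$ (cf.\ \eqref{as11}, \eqref{mod1}) gives $b^2/\lambda\equiv\mathrm{const}$; matching this constant with the energy relation $\lambda E_0=e_1 b^2+O(\lambda^2+b^4)$ extracted from Lemma \ref{lemma-3app-2} and Lemma \ref{lemma-mod-1} fixes it to $1/A_0^2$ with $A_0=\sqrt{e_1/E_0}$ as in \eqref{back-define-1}. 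Passing back to the $t$-variable yields, at leading order,
\begin{align*}
\lambda(t)=\frac{t^2}{4A_0^2},\qquad b(t)=\frac{\lambda^{1/2}(t)}{A_0}.
\end{align*}
Inserting the bootstrap control of $\tilde\epsilon$ into the error terms and integrating backward from the sharp data at $t_1$ then upgrades the constants in the two parameter estimates.

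The heart of the matter is the energy bound. By the coercivity estimate \eqref{coercivity-estimate} and the orthogonality conditions \eqref{mod-orthogonality-condition}, the functional $J_{A_0}$ is comparable to $\|D^{1/2}\tilde\epsilon\|_2^2+\|\tilde\epsilon\|_2^2/\lambda$, so a bound on $J_{A_0}$ transfers to the energy norm of $\tilde\epsilon$. I would then insert the bootstrap bounds and the parameter laws into the expression for $dJ_{A_0}/dt$ furnished by Lemma \ref{lemma-refine-energy-in}, estimating term by term: the $\psi$-contributions are absorbed using the smallness of the approximate-profile error from Lemma \ref{lemma-3app}, the cubic terms hitting $\tilde Q_t$ and the $k(x)$-weighted virial terms are controlled by $\|\tilde\epsilon\|_{H^{1/2}}^2$, and the $\Delta\phi$- and $\Delta^2\phi$-weighted integrals are handled through the convexity of $\phi$ and \cite[Lemma F.1]{KLR2013}. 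The decisive point is the choice $A=A_0$: it is exactly this value that calibrates the leading virial term $\tfrac{b}{2\lambda}\int|\tilde\epsilon|^2/\lambda$ against the energy so that, after dividing by a suitable power of $\lambda$, the quotient $J_{A_0}/\lambda^{a}$ is almost monotone in $t$. Integrating the resulting differential inequality backward from $t_1$ and using the value there yields the improved energy bound.

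The remaining estimate on $\|D^{1/2+\delta}\tilde\epsilon\|_2$ is obtained from a parallel energy functional built at the $H^{1/2+\delta}$ level and propagated by the same mechanism, now requiring fractional Leibniz and commutator bounds for $D^{1/2+\delta}$ against $k(x)$ and $\tilde Q$. I expect this higher-regularity closure, together with verifying the favorable sign of the virial quadratic form after setting $A=A_0$, to be the main obstacle: the nonlocality of $D$ and the scaling symmetry broken by the inhomogeneity $k$ both produce error terms with no counterpart in the classical NLS analysis \cite{Raphael2011-Jams,Merle-1993-Duke}, and each of these must be shown to be time-integrable and strictly below the bootstrap thresholds so that the constants genuinely improve.
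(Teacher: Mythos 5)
Your skeleton matches the paper's proof: a continuity/bootstrap argument, integration of the modulation ODEs from Lemma \ref{lemma-mod-1} matched against energy conservation to pin $b/\lambda^{1/2}$ to $1/A_0$, backwards integration of the localized energy/virial functional of Lemma \ref{lemma-refine-energy-in} together with a Gronwall argument, and a separate higher-regularity propagation. However, there is a genuine error at the step you yourself single out as decisive.

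You take the virial localization scale $A$ in $J_A$ to be $A_0=\sqrt{e_1/E_0}$ and assert that this choice ``calibrates'' the leading virial term so that $J_{A_0}/\lambda^{a}$ becomes almost monotone. This conflates two unrelated constants. In the paper, $A_0$ from \eqref{back-define-1} is the asymptotic value of $\lambda^{1/2}/b$ and enters only the parameter laws (Step 2 of the paper's proof); the cutoff scale $A$ is an independent, universal, sufficiently large constant, and no division of $J_A$ by a power of $\lambda$ occurs: the paper proves \eqref{back-1-energy}, i.e. $\frac{dJ_A}{dt}\geq \frac{b}{\lambda^2}\int|\tilde{\epsilon}|^2+\mathcal{O}(\cdots)$, so $J_A$ itself is almost monotone and one simply integrates backwards from $t_1$. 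More importantly, the favorable sign of the quadratic form $\mathcal{H}_A$ (the paper's claim \eqref{back-3-claim}) is not produced by any calibration against $E_0$; it is produced by the coercivity of the localized operators $L_{+,A}$, $L_{-,A}$ of \eqref{app-c-define-1}--\eqref{app-c-define-2} (Lemmas \ref{lemma-app-c-2} and \ref{lemma-app-c-3}), which holds only for $A$ sufficiently large because an error of size $\mathcal{O}(A^{-1})\int|\epsilon|^2$ must be absorbed into the spectral gap, together with the orthogonality conditions \eqref{mod-orthogonality-condition} and the control of the degenerate direction $(\epsilon_1,Q)$ --- which comes from mass conservation, as in \eqref{back-1-1}, not from orthogonality. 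If one literally sets $A=\sqrt{e_1/E_0}$, the argument breaks down whenever $E_0$ is large: then $A$ is small, the $\mathcal{O}(1/A)$ error overwhelms the coercivity constant, and positivity of $\mathcal{H}_A$ fails. So your proposal has the right architecture, but the mechanism you identify as the heart of the matter is wrong, and the actual key ingredient --- localized spectral coercivity at large $A$, with the $Q$-direction handled by mass conservation --- is absent from your argument.
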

\begin{proof}
By assumption, we have $u\in C^0([t_0,t_1];H^{1/2+\delta}(\mathbb{R}))$. Hence, by this continuity, let us consider a backwards time $t_0$ such that for any $t\in[t_0,t_1]$, we have bounds
\begin{align}\label{back-1}
&\|\tilde{\epsilon}\|_{L^2}\leq K\lambda(t),\,\,\|\tilde{\epsilon}(t)\|_{H^{1/2}}\leq K\lambda^{\frac{1}{2}}(t),\\\label{back-2}
&\|\tilde{\epsilon}(t)\|_{H^{\frac{1}{2}+\delta}}\leq K\lambda^{\frac{1}{4}-\delta}(t),\\\label{back-3}
&\left|\lambda(t)-\frac{t^2}{4A_0^2}\right|\leq K \lambda^{3}(t),\,\,\left|\frac{b(t)}{\lambda^{\frac{1}{2}}(t)}-\frac{1}{A_0}\right|\leq K\lambda(t),
\end{align}
for some large enough universal constant $K>0$.
		
\textbf{Step 1.} Bounds on energy and $L^2$-norm. We set
\begin{align}\label{back-1-Q}
\tilde{Q}(t,x)=\frac{1}{\lambda^{\frac{1}{2}}(t)}Q_{\mathcal{P}}\left(\frac{x}{\lambda(t)}\right)e^{i\gamma(t)}.
\end{align}
Let $J_A$ be given by \eqref{refine-energy-def}. We claim that \eqref{refine-energy-estimate} implies the following coercivity property:
\begin{align}\label{back-1-energy}
\frac{dJ_A}{dt}\geq  \frac{b}{\lambda^2}\int|\tilde{\epsilon}|^2+\mathcal{O}\left(\log^{\frac{1}{2}}\left(2+\|\tilde{\epsilon}\|_{H^{1/2}}^{-1}\right)\|\tilde{\epsilon}\|_{H^{1/2}}^2+K^4\lambda^{\frac{5}{2}}\right).
\end{align}
By the similar argument as \cite{KLR2013,Raphael2011-Jams,Raphael-2014-Duke}, we can easily obtain the following estimates:
		
Upper bound:
\begin{align}\label{back-energy-upper}
|J_A|\lesssim \|D^{\frac{1}{2}}\tilde{\epsilon}\|_{L^2}^2+\frac{1}{\lambda}\|\tilde{\epsilon}\|_{L^2}^2.
\end{align}

Lower bound:
\begin{align}\label{back-energy-lower}
J_A\geq\frac{c_0}{\lambda}\left[\|\epsilon\|_{H^{1/2}}^2-(\epsilon_1,Q)^2\right].
\end{align}
		
On the other hand, using the conservation of the $L^2$-mass and \eqref{mod-mass}, we deduce that
\begin{align}\notag
|\Re(\epsilon,Q_{\mathcal{P}})|\lesssim \|\epsilon\|_{L^2}^2+\lambda^2(t)+\left|\int |u|^2-\int Q^2\right|\lesssim\|\epsilon\|_{L^2}^2+K^2\lambda^2(t).
\end{align}
This implies
\begin{align}\label{back-1-1}
(\epsilon_1,Q)\lesssim o(\|\epsilon\|_{L^2}^2)+K^4\lambda^4(t).
\end{align}
Next, we define
\begin{align}\label{back-1-define}
X(t):=\|D^{\frac{1}{2}}\tilde{\epsilon}(t)\|_{L^2}^2+\frac{\|\tilde{\epsilon}(t)\|_{L^2}^2}{\lambda(t)}.
\end{align}
By integrating \eqref{back-1-energy} in time and using \eqref{back-energy-upper}, \eqref{back-energy-lower}, \eqref{back-1-1} and \eqref{back-1-define}, we find
\begin{align*}
X(t)\lesssim& X(t_1)+K^4\lambda^3(t)+\int_t^{t_1}\left(\log^{\frac{1}{2}}(2+\|\tilde{\epsilon}\|_{H^{1/2}}^{-1})\|\tilde{\epsilon}(\tau)\|+K^4\lambda^{\frac{5}{2}}(\tau)\right)d\tau\\
\lesssim&X(t_1)+K^4\lambda^3(t)+\int_t^{_1}\log^{\frac{1}{2}}(2+X(\tau)^{-\frac{1}{2}})X(\tau)d\tau,
\end{align*}
for $t\in[t_0,t_1]$ with some $t_0=t_0(A_0)<t_1$ close enough to $t_1<0$. By Gronwall's inequality, we deduce the desired bound for $X(t)$. In particular, we obtain
\begin{align}\label{back-1-2}
X(t)\lesssim X(t_1)+\lambda^3(t),\,\,t\in[t_0,t_1].
\end{align}
		
\textbf{Step 2.} Integration of the law for the parameters. We now integrate the law for the parameters. Indeed, lemma \ref{lemma-mod-1}, \eqref{back-3} and \eqref{back-1-2}, implies that
\begin{align}\label{back-2-1}
\left|b_s+\frac{1}{2}b^2\right|+\left|\frac{\lambda_s}{\lambda}+b\right|\lesssim\lambda^2+K^2\lambda^{\frac{5}{2}}\lesssim\lambda^2.
\end{align}
From the above bound \eqref{back-2-1}, we have
\begin{align}\label{back-2-2}
\left(\frac{b}{\lambda^{\frac{1}{2}}}\right)_s=\frac{b_s+\frac{1}{2}b^2}{\lambda^{\frac{1}{2}}}-\frac{b}{2\lambda^{\frac{1}{2}}}\left(\frac{\lambda_s}{\lambda}+b\right)\lesssim\lambda^{\frac{3}{2}}.
\end{align}
Hence, for $s<s_1$, we have
\begin{align}\label{back-2-3}
\frac{1}{A_0}-\frac{b}{\lambda^{\frac{1}{2}}}(s)\lesssim\frac{1}{A_0}-\frac{b}{\lambda^{\frac{1}{2}}}+\lambda^2(s)\lesssim\lambda(s),
\end{align}
where we used the assumption at time $t=t_1$. We now write the conservation of energy at $t$ and add the conservation of the mass, we deduce
\begin{align*}
b^2e_12=\lambda E_0+\frac{\lambda^2}{8}\int k^{\prime\prime}(0)y^2Q^4+k(0)\left(\int|u|^2-\int Q^2\right)+\mathcal{O}(\lambda^2),
\end{align*}
Since from the choice of $A_0$ in \eqref{back-define-1} and $|\|u\|_{L^2}^2-\|Q\|_{L^2}^2|=\mathcal{O}(\lambda^2)$, we have
\begin{align}\notag
\frac{1}{A_0^2}-\frac{b^2}{\lambda}+\lambda\lesssim\frac{1}{A_0}-\frac{b}{\lambda^{\frac{1}{2}}}+\lambda\lesssim\lambda,
\end{align}
where we used \eqref{back-2-3} in the last step. This together with \eqref{back-2-3} again yields
\begin{align}\label{back-2-4}
\left|\frac{b}{\lambda^{\frac{1}{2}}}-\frac{1}{A_0}\right|\lesssim\lambda.
\end{align}
From the \eqref{back-2-1} for the scaling parameter, we have
\begin{align*}
-\lambda_t=b+\mathcal{O}(\lambda^2)=\frac{\lambda^{\frac{1}{2}}}{A_0}+\mathcal{O}(\lambda^{\frac{3}{2}}+\lambda^2)=\frac{\lambda^{\frac{1}{2}}}{A_0}+\mathcal{O}(\lambda^{\frac{3}{2}})=\frac{\lambda^{\frac{1}{2}}}{A_0}+\mathcal{O}(t^3).
\end{align*}
Hence, using \eqref{back-2-4}, we have
\begin{align*}
\left|\lambda^{\frac{1}{2}}(t)-\frac{t}{2A_0}\right|\lesssim\left|\lambda^{\frac{1}{2}}(t_1)-\frac{t_1}{2A_0}\right|+\mathcal{O}(t^3)\lesssim t^2.
\end{align*}
Therefore, we obtain the desired bound for $\lambda$.
		
\textbf{Step 3.} Coercivity of the quadratic form in the RHS of \eqref{refine-energy-estimate}. We now turn to the proof of \eqref{back-1-energy}. Let
$\mathcal{H}_A(\tilde{\epsilon})$ denote the quadratic terms in $\tilde{\epsilon}$ on the right-hand side in \eqref{refine-energy-estimate}, that is
\begin{align}\label{back-3-1}
\mathcal{H}_A(\tilde{\epsilon}):=&-\frac{1}{\lambda}\Im\left(\int k(x)\tilde{Q}^2\bar{\tilde{\epsilon}}^2 \right)-\Re\left(\int k(x)\tilde{Q}_t\overline{(2|\tilde{\epsilon}|^2\tilde{Q}+\tilde{\epsilon}^2\bar{\tilde{Q}}})\right)\notag\\
&+\frac{b}{2\lambda}\int\frac{|\tilde{\epsilon}|^2}{\lambda}+\frac{b}{2\lambda}\int_{s=0}^{+\infty}\sqrt{s}\int \Delta\phi\left(\frac{x}{A\lambda}\right)|\nabla \tilde{\epsilon}_s|^2dxds\notag\\
&-\frac{1}{8}\frac{b}{A^2\lambda^3}\int_{s=0}^{\infty}\sqrt{s}\int \Delta^2\phi\left(\frac{x}{A\lambda}\right)|\tilde{\epsilon}_s|^2dxds\notag\\
&+b\Re\left(\int A\nabla\phi\left(\frac{x}{A\lambda}\right)k(x)(2|\tilde{\epsilon}|^2\tilde{Q}+\tilde{\epsilon}^2\bar{\tilde{Q}})\cdot\overline{\nabla \tilde{Q}}\right).
\end{align}
We now claim the following estimate holds:
\begin{align}\label{back-3-claim}
\mathcal{H}_A(\tilde{\epsilon})\geq\frac{c}{\lambda^{\frac{3}{2}}}\int|\epsilon|^2+\mathcal{O}(K^4\lambda^{\frac{5}{2}}),
\end{align}
with some constant $c>0$ and $\epsilon$ is given by \eqref{back-small-part}.
		
Indeed, first observe from lemma \ref{lemma-mod-1} we obtain
\begin{align}\label{back-3-mod}
	|Mod(t)|\lesssim K^2\lambda^2(t).
\end{align}
Using this estimate \eqref{back-3-mod}, we then compute from \eqref{back-1-Q}:
\begin{align*}
\partial_t\tilde{Q}=&e^{i\gamma(t)}\frac{1}{\lambda^{\frac{1}{2}}}\left[-\frac{\lambda_t}{\lambda}\Lambda Q_{\mathcal{P}}+i\gamma_tQ_{\mathcal{P}}+b_t\frac{\partial Q_{\mathcal{P}}}{\partial b}\right]\left(\frac{x}{\lambda}\right)-\frac{1}{\lambda^{\frac{1}{2}}}e^{i\gamma}Q_{\mathcal{P}}\left(\frac{x}{\lambda}\right)\\
=&\left(\frac{i}{\lambda}+\frac{b}{2\lambda}\right)\tilde{Q}+b\left(\frac{x}{\lambda}\right)\cdot\nabla\tilde{Q}+\mathcal{O}(K\lambda^{-\frac{1}{2}}).
\end{align*}
We now estimate the term
\begin{align*}
&-\Re\left(\int k(x)\tilde{Q}_t\overline{(2|\tilde{\epsilon}|^2\tilde{Q}+\tilde{\epsilon}^2\bar{\tilde{Q}}})\right)\notag\\
=&\frac{1}{\lambda}\Im\left(\int k(x)\tilde{Q}\overline{(2|\tilde{\epsilon}|^2\tilde{Q}+\tilde{\epsilon}^2\bar{\tilde{Q}})}\right)-\frac{b}{2\lambda}\Re\left(\int k(x){(2|\tilde{\epsilon}|^2\tilde{Q}+\tilde{\epsilon}^2\bar{\tilde{Q}})}\bar{\tilde{Q}}\right)\\
&-b\Re\left(\int\left(\frac{x}{\lambda}\right) k(x){(2|\tilde{\epsilon}|^2\tilde{Q}+\tilde{\epsilon}^2\bar{\tilde{Q}})}\cdot\overline{\nabla\tilde{Q}}\right)+\mathcal{O}(K\lambda^{-1}\|\epsilon\|_{L^2}^2).
\end{align*}
Plugging this estimate into \eqref{back-3-1}, we can obtain
\begin{align*}
\mathcal{H}_A(\tilde{\epsilon})=&\frac{b}{2\lambda^2}\Bigg[\int_{s=0}^{+\infty}\sqrt{s}\int\Delta\phi\left(\frac{x}{A}\right)|\nabla\epsilon_s|^2dxds+\int|\epsilon|^2\\
&-\int k(x)\left((|Q_{\mathcal{P}}|^2+2Q_{1\mathcal{P}}^2)\epsilon_1^2+4Q_{1\mathcal{P}}Q_{2\mathcal{P}}\epsilon_1\epsilon_2+(|Q_{\mathcal{P}}|^2+2Q_{2\mathcal{P}}^2)\epsilon_2^2\right)\\
&-\frac{1}{4A^2}\int_{s=0}^{\infty}\sqrt{s}\int\Delta^2\phi\left(\frac{x}{A}\right)|\epsilon_s|^2dxds\\
&+2\Re\left(\int k(x)\left(A\nabla\phi\left(\frac{x}{A}\right)-x\right)(2|\epsilon|^2Q_{\mathcal{P}}+\epsilon^2\bar{Q}_{\mathcal{P}})\cdot\overline{\nabla Q_{\mathcal{P}}}\right)\Bigg]\\
&+\mathcal{O}(K\lambda^{-1}\|\epsilon\|_{L^2}^2).
\end{align*}
Note that $\left(A\nabla\phi\left(\frac{x}{A}\right)-x\right)\equiv0$ for $|x|\leq A$ and we estimate
\begin{align*}
    &\left|\int k(x)\left(A\nabla\phi\left(\frac{x}{A}\right)-x\right)(2|\epsilon|^2Q_{\mathcal{P}}+\epsilon^2\bar{Q}_{\mathcal{P}})\cdot\overline{\nabla Q_{\mathcal{P}}}\right|\\
    \lesssim&\|(A+|x|)Q_{\mathcal{P}}\|_{L^\infty(|x|\geq A)}\|\nabla Q_{\mathcal{P}}\|_{L^\infty}\|\epsilon\|_{L^2}^2\\
    \lesssim&\left\|\frac{A+|x|}{1+|x|^2}\right\|_{L^\infty(|x|\geq A)}\|\epsilon\|_{L^2}^2\lesssim\frac{1}{A}\|\epsilon\|_{L^2}^2,
\end{align*}
where we use the uniform decay estimate $|Q_{\mathcal{P}}(x)|\lesssim\langle x\rangle^{-2}$ and the bound $\|\nabla Q_{\mathcal{P}}\|_{L^\infty}\lesssim\|Q_{\mathcal{P}}\|_{H^2}\lesssim1$.
Using the lemma \ref{lemma-app-c-2}, lemma \ref{lemma-app-c-3} and  the definitions of $L_{+,A}$ and $L_{-,A}$, see \eqref{app-c-define-1} and \eqref{app-c-define-2}, respectively, we deduce that
\begin{align*}
\mathcal{H}_A(\tilde{\epsilon})&=\frac{b}{2\lambda^2}\Bigg[(L_{+,A}\epsilon_1,\epsilon_1)+(L_{-,A}\epsilon_2,\epsilon_2)+\mathcal{O}\left(\frac{1}{A}\int|\epsilon|^2\right)\Bigg]+\frac{1}{\lambda^{\frac{3}{2}}}\mathcal{O}(K\lambda^{\frac{1}{2}}\|\epsilon\|_{L^2}^2)\\
&\gtrsim \frac{1}{\lambda^{\frac{3}{2}}}\left(\int|\epsilon|^2-(\epsilon_1,Q)^2\right)\\
&\gtrsim\frac{1}{\lambda^{\frac{3}{2}}}\int|\epsilon|^2+\mathcal{O}(K\lambda^{\frac{5}{2}}),
\end{align*}
where  we used $b\sim\lambda^{\frac{1}{2}}$ and \eqref{back-1-1}. This completes the claim \eqref{back-3-claim}.
		
\textbf{Step 4.} Controlling the remainder terms in \eqref{refine-energy-estimate}. It remains to control the $\psi$ terms in \eqref{refine-energy-estimate}. According to \eqref{equ-refine-approximate-1}, \eqref{back-1-Q} and the construction of $Q_{\mathcal{P}}$, we have
\begin{align*}
\psi=&\frac{1}{\lambda^{\frac{3}{2}}}\Bigg[i\left(b_s+\frac{b^2}{2}\right)\partial_bQ_{\mathcal{P}}+i\left(\frac{\lambda_s}{\lambda}+b\right)\lambda\partial_{\lambda}Q_{\mathcal{P}}\\
&+i\left(\frac{\lambda_s}{\lambda}+b\right)\Lambda Q_{\mathcal{P}}+\tilde{\gamma}_sQ_{\mathcal{P}}+\Psi_{\mathcal{P}}\Bigg]\left(\frac{x}{\lambda}\right)e^{i\gamma}.
\end{align*}
By the lemma \ref{lemma-3app} and \eqref{back-3-mod}, we have the rough bound on $\psi$:
\begin{align}
\|\nabla^j\psi\|_{L^2}\lesssim K^2\lambda^{1-j},\,\,\text{for}\,\,j=0,1
\end{align}
Write $\psi=\psi_1+\psi_2$. Then using the similar argument as \cite{KLR2013,Raphael2011-Jams}, we can obtain that
\begin{align*}
&\left|\Im\left(\int\left[-D\psi_2-\frac{\psi_2}{\lambda}+k(x)(2|\tilde{Q}|^2\psi_2-\tilde{Q}^2\bar{\psi}_2)\right]\bar{\tilde{\epsilon}}\right)\right|\\
\lesssim&K^2\lambda^{\frac{1}{2}}\|\epsilon\|_{L^2}\lesssim o\left(\frac{\|\epsilon\|_{L^2}^2}{\lambda^{\frac{3}{2}}}\right)+K^4\lambda^{\frac{5}{2}}
\end{align*}
and
\begin{align*}
&\left|\Im\left(\int\left[-D\psi_1-\frac{\psi_1}{\lambda}+k(x)(2|\tilde{Q}|^2\psi_1-\tilde{Q}^2\bar{\psi}_1)\right]\bar{\tilde{\epsilon}}\right)\right|\\
\lesssim&K^2\lambda^{\frac{1}{2}}\|\epsilon\|_{L^2}\lesssim o\left(\frac{\|\epsilon\|_{L^2}^2}{\lambda^{\frac{3}{2}}}\right)+K^4\lambda^{\frac{5}{2}}.
\end{align*}
Finally, we recall \eqref{back-3-claim} and insert all the derived estimates for the terms involving $\psi$ in \eqref{refine-energy-estimate} and we conclude that the coercivity property \eqref{back-1-energy} holds.
		
\textbf{Step 5.} Bounds on $\|D^{\frac{1}{2}+\delta}\tilde{\epsilon}(t)\|_{L^2}$. By the similar argument as \cite{KLR2013}, we can obtain the $H^{\frac{1}{2}+\delta}$ estimate. Here we omit the details.
\end{proof}
	
\section{Proof of the Theorem \ref{Theorem-1-hf-2}.}
In this section, we prove the following result.
\begin{theorem}
Let $\gamma_0$ and $E_0>0$ be given. Then there exist a time $t_0<0$ and a solution $u_c\in C^0([t_0,0); H^{\frac{1}{2}}(\mathbb{R}))$ of \eqref{equ-1-hf-in} such that $u$ blowup  at time $T=0$ with
\begin{align}\notag
E(u_c)=E_0,\,\,\text{and}\,\, \|u_c\|_2^2=\|Q\|_2^2.
\end{align}
Furthermore, we have $\|D^{\frac{1}{2}}u_c\|_2\sim t^{-1}$ as $t\rightarrow0^{-}$, and $u_c$ is of the form	\begin{align}\notag
u_c(t,x)=\frac{1}{\lambda_c^{\frac{1}{2}}(t)}[Q_{\mathcal{P}_c(t)}+\epsilon_c]
\left(t,\frac{x}{\lambda}\right)e^{i\gamma(t)}=\tilde{Q}_c+\tilde{\epsilon}_c,
\end{align}
where $\mathcal{P}_c(t)=(b_c(t),\lambda_c(t))$, and $\epsilon_c$ satisfies the orthogonality condition \eqref{mod-orthogonality-condition}. Finally, the following estimate hold:
\begin{align*}
&\|\tilde{\epsilon}_c\|_2\lesssim\lambda_c,\ \|\tilde{\epsilon}_c\|_{H^{1/2}}\lesssim\lambda_c^{\frac{1}{2}},\\
&\lambda_c(t)-\frac{t^2}{4A_0^2}=\mathcal{O}(\lambda_c^3),\ \frac{b_c}{\lambda_c^{\frac{1}{2}}}(t)-\frac{1}{A_0}=\mathcal{O}(\lambda_c),\\
&\,\,\gamma(t)=-\frac{4A_0^2}{t}+\gamma_0+\mathcal{O}(\lambda^{\frac{1}{2}}).
\end{align*}
for $t\in[t_0,0)$ and $t$ sufficiently close to $0$.
Here $A_0>0$ is the constant  defined in \eqref{back-define-1}.
\end{theorem}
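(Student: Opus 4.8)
The plan is to construct $u_c$ by a compactness argument built on the backwards propagation estimate of Lemma \ref{lemma-back}, in the spirit of \cite{Raphael2011-Jams,KLR2013}. First I would fix a sequence of times $t_n\nearrow 0^-$ and, for each $n$, prescribe data at $t=t_n$ of the form \eqref{back-decomposition} with $\epsilon(t_n)=0$, i.e. $u_n(t_n)=\tilde Q(t_n)$, where the parameters are frozen at
\[
\lambda(t_n)=\frac{t_n^2}{4A_0^2},\qquad \frac{b(t_n)}{\lambda^{1/2}(t_n)}=\frac{1}{A_0},
\]
with $A_0$ from \eqref{back-define-1}. By Lemma \ref{lemma-3app-2} this profile has mass $\|Q\|_2^2+\mathcal O(\lambda^2(t_n))$ and renormalized energy $e_1b^2(t_n)+\mathcal O(b^4+\lambda^2)$, so that (after the scaling $E(\tilde Q)=\lambda^{-1}\tilde E(Q_{\mathcal P})$) the physical energy is $\approx E_0$; a small tuning of $b(t_n)$, using the monotone dependence of $E(Q_{\mathcal P})$ on $b$ together with the intermediate value theorem, pins the energy to exactly $E_0$. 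With this choice the hypotheses of Lemma \ref{lemma-back} are satisfied at the initial time $t_1:=t_n$.

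Next I would solve \eqref{equ-1-hf-in} backwards from $t_n$. The crucial point is that Lemma \ref{lemma-back} produces bounds that are uniform in $n$ on a common interval $[t_0,t_n]$, with $t_0=t_0(A_0)<0$ independent of $n$ (and, by the assertion of Theorem \ref{Theorem-1-hf-2}, independent of $E_0$). In particular one controls $\|D^{1/2}\tilde\epsilon_n\|_2$, $\|\tilde\epsilon_n\|_2/\lambda_n^{1/2}$, the higher regularity $\|D^{1/2+\delta}\tilde\epsilon_n\|_2$, and the parameter laws $\lambda_n(t)=t^2/(4A_0^2)+\mathcal O(\lambda_n^3)$, $b_n/\lambda_n^{1/2}=1/A_0+\mathcal O(\lambda_n)$ throughout $[t_0,t_n]$. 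Passing to the limit, the uniform $H^{1/2+\delta}$ bound at the fixed time $t_0$ yields, along a subsequence, $u_n(t_0)\rightharpoonup u_{c,0}$ weakly in $H^{1/2+\delta}$; the uniform pointwise decay of $Q_{\mathcal P}$ in \eqref{3app-decay} together with the smallness of $\epsilon_n$ gives tightness, hence strong convergence in $H^{1/2}$, and in $L^2$, with no loss of mass at spatial infinity. Letting $u_c$ be the solution of \eqref{equ-1-hf-in} with $u_c(t_0)=u_{c,0}$, local well-posedness and continuous dependence give $u_n\to u_c$ on compact subintervals; since $\bigcup_n[t_0,t_n]=[t_0,0)$, the uniform estimates pass to the limit, so $u_c$ exists on $[t_0,0)$, admits the stated decomposition with $\epsilon_c$ satisfying \eqref{mod-orthogonality-condition}, and obeys $\|\tilde\epsilon_c\|_2\lesssim\lambda_c$, $\|\tilde\epsilon_c\|_{H^{1/2}}\lesssim\lambda_c^{1/2}$ and the parameter laws. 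Conservation of mass and energy for each $u_n$, combined with the strong convergence and the defects $\mathcal O(\lambda_n^2(t_n))\to0$, then forces $\|u_c\|_2^2=\|Q\|_2^2$ and $E(u_c)=E_0$ exactly.

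Finally I would read off the remaining conclusions. Since $\lambda_c(t)=t^2/(4A_0^2)+\mathcal O(\lambda_c^3)$ and $\|\tilde\epsilon_c\|_{H^{1/2}}\lesssim\lambda_c^{1/2}\to0$, the decomposition gives $\|D^{1/2}u_c\|_2\sim\lambda_c^{-1/2}\sim|t|^{-1}$, proving blowup at $T=0$ with the asserted rate. For the phase, I would integrate $\gamma_t=\gamma_s/\lambda_c=(1+\tilde\gamma_s)/\lambda_c$ using $\lambda_c\sim t^2/(4A_0^2)$ and the control of $\tilde\gamma_s$ from Lemma \ref{lemma-mod-1}, obtaining $\gamma(t)=-4A_0^2/t+\gamma_0+\mathcal O(\lambda_c^{1/2})$.

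The hardest step is the compactness/limiting argument: securing strong $L^2$ convergence, that is tightness with no escape of mass to spatial infinity, so that the exact critical mass and the energy identity survive in the limit, and verifying that the weak limit genuinely solves \eqref{equ-1-hf-in} while inheriting the orthogonality conditions \eqref{mod-orthogonality-condition} and the full set of bounds. The uniformity of $t_0$ in Lemma \ref{lemma-back} is precisely what makes the limit exist on a nontrivial interval $[t_0,0)$; and the absence of a momentum conservation law, which would otherwise obstruct control of a translation parameter, is circumvented here by working in the radial class, which removes the translation mode from the decomposition altogether.
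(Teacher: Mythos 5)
The paper itself gives no written proof of this theorem: it simply cites the standard compactness scheme of Merle, Rapha\"el--Szeftel and Krieger--Lenzmann--Rapha\"el. Your proposal is exactly that scheme -- backward integration from a sequence $t_n\nearrow 0^-$ with well-prepared data $\epsilon(t_n)=0$, $\lambda(t_n)=t_n^2/(4A_0^2)$, $b(t_n)=\lambda^{1/2}(t_n)/A_0$, uniform bounds on a common interval $[t_0,t_n]$ from Lemma \ref{lemma-back}, and extraction of a limiting solution -- so in structure you are doing what the paper intends. The phase computation and the identification of the blowup rate from $\lambda_c(t)\sim t^2/(4A_0^2)$ are also correct.

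There is, however, a genuine gap at the step you yourself flag as hardest. Your claim that ``the uniform pointwise decay of $Q_{\mathcal P}$ together with the smallness of $\epsilon_n$ gives tightness, hence strong convergence in $H^{1/2}$ and in $L^2$'' does not follow: at the fixed time $t_0$ the remainders only satisfy $\|\tilde\epsilon_n(t_0)\|_{L^2}\lesssim\lambda(t_0)$, a \emph{fixed} (not vanishing in $n$) quantity, and the uniform $H^{1/2+\delta}$ bound yields local compactness but says nothing about tails; so a priori an $O(\lambda(t_0))$ amount of mass may escape to spatial infinity, and strong $\dot H^{1/2}$ convergence (which you need to pass $E(u_n)=E_0$ to the limit) is even less automatic. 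The standard repair, which is what the cited proofs actually do, is to first define $u_c$ from the \emph{weak} limit, transfer all decomposition bounds ($\|\tilde\epsilon_c\|_2\lesssim\lambda_c$, $\|\tilde\epsilon_c\|_{H^{1/2}}\lesssim\lambda_c^{1/2}$, the parameter laws) to $u_c$ at each fixed $t$ by weak lower semicontinuity, and conclude that $u_c$ blows up at $T=0$. Then either (i) invoke the global existence criterion \eqref{intro-2-in}: blowup forces $\|u_c\|_2\geq\|Q\|_2$, while weak lower semicontinuity and $\|u_n\|_2^2=\|Q\|_2^2+\mathcal O(\lambda^2(t_n))$ give $\|u_c\|_2\leq\|Q\|_2$; the resulting norm convergence upgrades weak to strong $L^2$ convergence, hence strong $H^{1/2}$ convergence by interpolation against the $H^{1/2+\delta}$ bound, and the energy identity follows; or (ii) avoid strong convergence altogether by using conservation of mass and energy for $u_c$ itself together with the $t\to0^-$ asymptotics of its decomposition: $\|u_c\|_2^2=\lim_{t\to0^-}\big(\|Q_{\mathcal P_c}\|_2^2+\mathcal O(\lambda_c)\big)=\|Q\|_2^2$ and, from the energy expansion of Lemma \ref{lemma-mod-1}, $E(u_c)=\lim_{t\to0^-}e_1 b_c^2/\lambda_c+\mathcal O(\lambda_c^{1/2})=e_1/A_0^2=E_0$. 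Route (ii) also shows that your intermediate-value tuning of $b(t_n)$ to make the energy exactly $E_0$ at each $t_n$ is unnecessary: the choice of $A_0$ alone pins the limiting energy.
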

\begin{proof}
By the similar argument as \cite{Martel-2005-Amer,Martel-2006-poincare,Merle-1990-CMP,Raphael2011-Jams,KLR2013,Raphael-2009-cpam}, we can obtain the desired result. Here we omit the details.
\end{proof}
	
\appendix
\section{ Coercivity estimate for the localized energy}
In this section, we give some useful lemmas. By the similar argument as \cite{KLR2013}, we can easily obtain the following lemmas, so here we omit the details.
In the following, we assume that $A>0$ is a sufficiently large constant. Let $\phi:\mathbb{R}\rightarrow\mathbb{R}$ be the smooth cutoff function introduced in \eqref{energy-cutoff-function}, Section \ref{section-refined-energy}. For $\epsilon=\epsilon_1+i\epsilon_2\in H^{1/2}(\mathbb{R})$, we consider the quadratic forms
\begin{align}\label{app-c-define-1}
L_{+,A}(\epsilon_1):&=\int_{s=0}^{\infty}\sqrt{s}\int\Delta \phi_A|\nabla(\epsilon_1)_{s}|^2dxds+\int|\epsilon_{1}|^2-2\int k(x)Q|\epsilon_1|^2\\\label{app-c-define-2}
L_{-,A}(\epsilon_2):&=\int_{s=0}^{\infty}\sqrt{s}\int\Delta \phi_A|\nabla(\epsilon_2)_{s}|^2dxds+\int|\epsilon_{2}|^2-\int k(x) Q|\epsilon_2|^2,
\end{align}
where $\Delta \phi_A=\Delta(\phi\left(\frac{x}{A}\right))$. As in lemma \ref{lemma-refine-energy-in}, we denote
\begin{align}\label{app-c-define-3}
u_s=\sqrt{\frac{2}{\pi}}\frac{1}{-\Delta+s}u,\  \ \text{for}\ s>0.
\end{align}
We start with the following simple identity.
	
For $u\in H^{1/2}(\mathbb{R})$, we have
\begin{align}\label{app-c-identity}
\int_0^{\infty}\sqrt{s}\int_{\mathbb{R}}|\nabla u_s|^2dxds=\|D^{1/2}u\|_2^2.
\end{align}
Indeed, by applying Fubini's theorem and using Fourier transform, we find that
\begin{align}\notag
\int_0^{\infty}\sqrt{s}\int_{\mathbb{R}}|\nabla u_s|^2dxds=\frac{2}{\pi}
\int_{\mathbb{R}}\int_0^{\infty}\frac{\sqrt{s}ds}{(\xi^2+s)^2}|\xi|^2|\hat{u}(\xi)|^2d\xi=\|D^{1/2}u\|_2^2.
\end{align}
In general, we have
\begin{align}\label{app-c-identity-2}
\frac{2}{\pi}\int_0^{\infty}\sqrt{s}\int_{\mathbb{R}}|(-\Delta)^{\alpha/2}u_s|^2dxds=\|D^{\alpha-\frac{1}{2}}u\|_2^2.
\end{align}
Next, we establish a technical result, which show that, when taking the limit $A\rightarrow+\infty$, the quadratic form $\int_0^{\infty}\sqrt{s}\int\Delta\phi_A|\nabla u_s|^2dxds+\|u\|_2^2$ defines a weak topology that serves as a useful substitute for weak convergence in $H^{1/2}(\mathbb{R})$. The precise statement reads as follows.
\begin{lemma}\label{app-lemma-c-1}
Let $A_n\rightarrow\infty$ and suppose that $\{u_n\}_{n=1}^{\infty}$ is a sequence in $H^{1/2}(\mathbb{R})$ such that
\begin{align}\notag
\int_0^{\infty}\sqrt{s}\int\Delta\phi_{A_{n}}|\nabla (u_n)_s|^2dxds+\|u_n\|_2^2\leq C,
\end{align}
for some constant $C>0$ independent of $n$. Then, after possibly passing to a subsequence of $\{u_n\}_{n=1}^{\infty}$, we have that
\begin{align}\notag
u_n\rightharpoonup u\ \text{weakly in}\ L^2(\mathbb{R})\ \text{and}\ u_n\rightarrow u\ \text{strongly in}\ L^2_{loc}(\mathbb{R}),
\end{align}
and $u\in H^{1/2}(\mathbb{R})$. Moreover, we have the bound
\begin{align}\notag
\|D^{1/2}u\|_2^2\leq\liminf_{n\rightarrow\infty}\int_0^{\infty}\sqrt{s}\int\Delta\phi_{A_n}|\nabla (u_n)_s|^2dxds.
\end{align}
\end{lemma}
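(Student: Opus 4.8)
The plan is to establish the three assertions separately: weak $L^2$-convergence, strong $L^2_{loc}$-convergence, and the lower semicontinuity bound (from which $u\in H^{1/2}$ also follows). Throughout I use only the elementary properties of the weight that are immediate from \eqref{energy-cutoff-function}: $\Delta\phi_{A_n}\geq0$, $\Delta\phi_{A_n}\equiv1$ on $[-A_n,A_n]$, and $\Delta\phi_{A_n}\to1$ pointwise. Since the hypothesis forces $\|u_n\|_2^2\leq C$, the sequence is bounded in $L^2(\mathbb{R})$, so by Banach--Alaoglu a subsequence satisfies $u_n\rightharpoonup u$ weakly in $L^2$ for some $u\in L^2(\mathbb{R})$; this is the only subsequence extraction in the proof.

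\textbf{Lower semicontinuity and $u\in H^{1/2}$.} Fix $R>0$. For each fixed $s>0$ the map $\nabla(-\Delta+s)^{-1}$ is a bounded Fourier multiplier on $L^2$, hence weak-to-weak continuous, and composing with restriction to $[-R,R]$ gives $\nabla(u_n)_s\rightharpoonup\nabla u_s$ weakly in $L^2([-R,R])$. Weak lower semicontinuity of the norm then yields $\int_{-R}^R|\nabla u_s|^2\leq\liminf_n\int_{-R}^R|\nabla(u_n)_s|^2$ for every $s$. Applying Fatou's lemma in $s$ to the nonnegative integrands and using $\Delta\phi_{A_n}\geq\mathbf{1}_{[-A_n,A_n]}\geq\mathbf{1}_{[-R,R]}$ for $A_n\geq R$, I obtain
\begin{align}\notag
\int_0^\infty\sqrt{s}\int_{-R}^R|\nabla u_s|^2\,dx\,ds\leq\liminf_{n\to\infty}\int_0^\infty\sqrt{s}\int\Delta\phi_{A_n}|\nabla(u_n)_s|^2\,dx\,ds.
\end{align}
Letting $R\to\infty$, monotone convergence together with the identity \eqref{app-c-identity} turns the left-hand side into $\|D^{1/2}u\|_2^2$. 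This proves the claimed lower semicontinuity inequality and, since $u\in L^2$, also shows $u\in H^{1/2}(\mathbb{R})$.

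\textbf{Strong local convergence.} I would show that $\{u_n\}$ is precompact in $L^2(K)$ for each compact $K$; then every subsequence has a further subsequence converging strongly in $L^2(K)$, necessarily to the weak limit $u$, and since $K$ is arbitrary this gives $u_n\to u$ in $L^2_{loc}$. By the Riesz--Kolmogorov criterion and the uniform $L^2$ bound, precompactness in $L^2(K)$ reduces to uniform local high-frequency smallness, namely that the $L^2(K)$-norm of the high-frequency part of $u_n$ tends to $0$ uniformly in $n$ as the frequency cutoff grows. This in turn follows from a uniform local bound $\sup_n\|u_n\|_{H^{1/2}(K')}<\infty$ on a slightly larger set $K'$ together with Rellich's theorem. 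To obtain such a local bound from the weighted seminorm, I would fix a cutoff $\chi$ equal to $1$ on $K$ and supported in $K'$ and split the Gagliardo seminorm of $\chi u_n$ using $\chi(x)u_n(x)-\chi(y)u_n(y)=\chi(x)(u_n(x)-u_n(y))+u_n(y)(\chi(x)-\chi(y))$; the second piece is bounded by $C_\chi\|u_n\|_2^2$ (because $\chi\in C_c^\infty$), while the first reduces to a genuinely localized seminorm of $u_n$, to be controlled by the hypothesis $\int_0^\infty\sqrt{s}\int_{K''}|\nabla(u_n)_s|^2\leq C$, valid once $A_n$ exceeds the diameter of $K''$.

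\textbf{Main obstacle.} The delicate point is precisely this last comparison: converting the \emph{spatially} truncated square-function bound on $(u_n)_s$ into genuine \emph{local Sobolev} control of $u_n$. The difficulty is nonlocality, since $(u_n)_s=\sqrt{2/\pi}(-\Delta+s)^{-1}u_n$ is built from a resolvent whose kernel $\frac{1}{2\sqrt{s}}e^{-\sqrt{s}|\cdot|}$ decays only slowly as $s\to0^+$, so that $\nabla(u_n)_s$ on a compact set sees $u_n$ globally and is a priori sensitive to far-field, low-frequency mass. I expect to resolve this by separating frequencies: the low-frequency part of $u_n$ is bounded in $H^1$ by the $L^2$ bound alone and is therefore already precompact in $L^2_{loc}$ by Rellich, so it can be discarded, while for the high-frequency part the resolvent is effectively local at the relevant scales and the weighted seminorm does control the local $H^{1/2}$-norm uniformly. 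Making the small-$s$ bookkeeping quantitative and uniform in $n$ is where the real work lies; the remaining steps are soft functional analysis, and this is exactly the point at which one invokes the analogous argument of \cite{KLR2013}.
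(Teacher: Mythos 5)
The paper does not actually prove this lemma: its entire ``proof'' is the blanket remark at the start of the appendix that the statements follow ``by the similar argument as \cite{KLR2013}'', with all details omitted. So the comparison is against that citation, and your proposal is strictly more detailed than what the paper offers. Two of the three assertions you prove completely and correctly. Weak $L^2$ convergence is indeed just Banach--Alaoglu. Your lower-semicontinuity argument is sound: for fixed $s>0$ the multiplier $\nabla(-\Delta+s)^{-1}$ is bounded on $L^2$, hence weak-to-weak continuous; weak lower semicontinuity of the $L^2([-R,R])$ norm, Fatou in $s$, the comparison $\Delta\phi_{A_n}\geq\mathbf{1}_{[-R,R]}$ for $A_n\geq R$, and monotone convergence in $R$ then give the liminf bound, and since the identity \eqref{app-c-identity} holds with values in $[0,\infty]$ for any $u\in L^2$ (by Tonelli and Plancherel), this also yields $u\in H^{1/2}$. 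Note too that you implicitly read $\Delta\phi_A$ as $(\Delta\phi)(\cdot/A)$ rather than the literal $\Delta\bigl(\phi(\cdot/A)\bigr)=A^{-2}(\Delta\phi)(\cdot/A)$; that is the reading under which the lemma is true and the one consistent with \cite{KLR2013}, so this disambiguation is correct and worth stating explicitly.

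The one incomplete point is exactly the one you flag yourself: strong $L^2_{loc}$ convergence. Your reduction is the right skeleton --- Rellich for $P_{\leq N}u_n$, which is bounded in $H^1$ for each fixed $N$, plus uniform-in-$n$ smallness on compacts of the high-frequency part --- and you correctly identify the genuine obstruction: the kernel of $(-\Delta+s)^{-1}$ has width $s^{-1/2}$, so the spatially truncated square function sees far-away, low-frequency mass, and the naive cutoff/commutator estimate loses a logarithm as $s\to0^+$. But the decisive inequality (that the weighted square function plus $\|u_n\|_2^2$ controls, uniformly in $n$, the local high-frequency $L^2$ mass, equivalently a local $H^{1/2}$ norm) is asserted rather than proven; you defer it to \cite{KLR2013}. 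Since that is literally what the paper does for the entire lemma, your proposal is no less complete than the paper's own treatment; but judged as a self-contained proof, this single step is a genuine gap, and it is the only place in the lemma where real work is required.
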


\begin{lemma}\label{lemma-app-c-2}
Let $L_{+,A}(\epsilon_1)$ and $L_{-,A}(\epsilon_2)$ be the quadratic forms defined \eqref{app-c-define-1} and \eqref{app-c-define-2}, respectively. Then there exist  constants $C_0>0$ and $A_0>0$ such that, for all $A\geq A_0$ and all $\epsilon=\epsilon_1+i\epsilon_2\in H^{1/2}(\mathbb{R})$, we have the coercivity estimate
\begin{align}\notag
(L_{+,A}\epsilon_1,\epsilon_1)+(L_{-,A}\epsilon_2,\epsilon_2)\geq C_0\int|\epsilon|^2-\frac{1}{C_0}\left\{(\epsilon_1,Q)^2+(\epsilon_1,S_{1,0})^2+|(\epsilon_2,\rho_1)|^2\right\}.
\end{align}
Here $S_{1,0}$ is the unique functions such that $L_{-}S_{1,0}=\Lambda Q$ with $(S_{1,0},Q)=0$, , respectively, and the function $\rho_1$ is defined in \eqref{mod-definition-rho}.
\end{lemma}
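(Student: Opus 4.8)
The plan is to deduce the localized coercivity from the standard (non-localized) coercivity of the linearized operators $L_{+}$ and $L_{-}$ by a compactness-and-contradiction argument, in which Lemma \ref{app-lemma-c-1} supplies the weak-compactness structure adapted to the heat-type representation \eqref{app-c-identity} of the $\dot H^{1/2}$ norm. Throughout I treat the potential terms $\int k(x)Q^{2}|\epsilon_i|^{2}$ (which to leading order reproduce the linearizations $L_{+}=D+1-3Q^{2}$ and $L_{-}=D+1-Q^{2}$) as a localized perturbation: localized because $Q$ decays like $\langle x\rangle^{-2}$, and bounded because $0<k\le1$, with $k\approx k(0)=1$ on the essential support of $Q$. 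This reduces the analysis to the homogeneous case of \cite{KLR2013,Frank-lenzmann2013}.

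First I would record the non-localized coercivity. By the spectral theory of $L=\mathrm{diag}(L_{+},L_{-})$ from \cite{Frank-lenzmann2013}, one has $\ker L_{-}=\mathrm{span}\{Q\}$ with $L_{-}\ge0$, while $L_{+}$ has kernel $\mathrm{span}\{\nabla Q\}$ (odd, hence invisible on the even class to which $\epsilon$ effectively belongs) together with a single negative direction. The variational characterization of $Q$ then yields a constant $c_{0}>0$ with
\[
(L_{+}\epsilon_{1},\epsilon_{1})+(L_{-}\epsilon_{2},\epsilon_{2})\ge c_{0}\|\epsilon\|_{H^{1/2}}^{2}
\]
whenever $\epsilon_{1}\perp\{Q,S_{1,0}\}$ and $\epsilon_{2}\perp\rho_{1}$. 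Here orthogonality to $\rho_{1}$ controls the kernel direction $Q$ of $L_{-}$ because $(Q,\rho_{1})=(L_{-}S_{1,0},S_{1,0})=2e_{1}\neq0$, and orthogonality to $Q$ and $S_{1,0}$ neutralizes the negative direction of $L_{+}$, exactly as in \eqref{coercivity-estimate}.

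Next I would pass to the localized forms under the same orthogonality constraints, arguing by contradiction. If the estimate $(L_{+,A}\epsilon_{1},\epsilon_{1})+(L_{-,A}\epsilon_{2},\epsilon_{2})\ge C_{0}\int|\epsilon|^{2}$ failed uniformly for large $A$, there would be $A_{n}\to\infty$ and $\epsilon^{(n)}$ with $\|\epsilon^{(n)}\|_{L^{2}}=1$, satisfying the three orthogonality conditions, and with total form $<1/n$. Since the potential term is bounded by $\|Q^{2}\|_{L^{\infty}}\|\epsilon^{(n)}\|_{L^{2}}^{2}$, the localized kinetic quantity $\int_{0}^{\infty}\sqrt{s}\int\Delta\phi_{A_{n}}|\nabla\epsilon^{(n)}_{s}|^{2}$ stays bounded, so Lemma \ref{app-lemma-c-1} applies: up to a subsequence $\epsilon^{(n)}_{i}\rightharpoonup\epsilon^{\infty}_{i}$ weakly in $L^{2}$ and strongly in $L^{2}_{loc}$, with $\epsilon^{\infty}\in H^{1/2}$ and the stated lower semicontinuity of the kinetic part. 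The decay of $Q$ together with strong $L^{2}_{loc}$ convergence gives $\int kQ^{2}|\epsilon^{(n)}_{i}|^{2}\to\int kQ^{2}|\epsilon^{\infty}_{i}|^{2}$, while weak convergence preserves the three orthogonality relations. Combining lower semicontinuity of the kinetic and mass parts with convergence of the potential yields $(L_{+}\epsilon^{\infty}_{1},\epsilon^{\infty}_{1})+(L_{-}\epsilon^{\infty}_{2},\epsilon^{\infty}_{2})\le0$; by the non-localized coercivity this forces $\epsilon^{\infty}=0$. But then the potential terms vanish in the limit and the total form reduces to $\mathrm{kinetic}_{n}+\|\epsilon^{(n)}\|_{L^{2}}^{2}-o(1)\ge1-o(1)$, contradicting the bound $<1/n$. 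Hence there exist $A_{0},C_{0}>0$ such that the localized form controls $\|\epsilon\|_{L^{2}}^{2}$ on the orthogonal subspace for all $A\ge A_{0}$.

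Finally I would remove the orthogonality conditions by the usual algebraic decomposition, writing $\epsilon_{1}=aQ+bS_{1,0}+\epsilon_{1}^{\perp}$ and $\epsilon_{2}=c\rho_{1}+\epsilon_{2}^{\perp}$ with $\epsilon_{1}^{\perp}\perp\{Q,S_{1,0}\}$ and $\epsilon_{2}^{\perp}\perp\rho_{1}$ (using $(S_{1,0},Q)=0$), expanding the quadratic form, applying the coercivity just proved to the $\perp$-parts, and absorbing the finitely many cross terms by Cauchy--Schwarz into $C_{0}\int|\epsilon|^{2}$ at the price of the correction $\tfrac{1}{C_{0}}\{(\epsilon_{1},Q)^{2}+(\epsilon_{1},S_{1,0})^{2}+|(\epsilon_{2},\rho_{1})|^{2}\}$. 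I expect the main obstacle to be the limiting step: because the localized kinetic form $\int_{0}^{\infty}\sqrt{s}\int\Delta\phi_{A}|\nabla u_{s}|^{2}$ is strictly weaker than $\|D^{1/2}u\|_{L^{2}}^{2}$ (one has $\Delta\phi_{A}\le1$, so genuine kinetic energy is lost at infinity and $L_{\pm,A}$ cannot simply be dominated from below by $L_{\pm}$), ordinary weak $H^{1/2}$-compactness is unavailable, and one must rely precisely on the substitute convergence and semicontinuity furnished by Lemma \ref{app-lemma-c-1}; verifying that the potential and orthogonality terms survive this weaker convergence, which hinges on the $\langle x\rangle^{-2}$ decay of $Q$, is the delicate point.
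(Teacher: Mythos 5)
Your proposal is correct and takes essentially the same route as the paper, which omits the proof entirely and defers to the argument of \cite{KLR2013}: that argument is precisely your scheme of (i) non-localized coercivity of $L_{\pm}$ under the three orthogonality conditions, (ii) a contradiction/compactness argument along $A_n\to\infty$ in which Lemma \ref{app-lemma-c-1} supplies the weak convergence, lower semicontinuity of the localized kinetic part, and convergence of the potential terms via the $\langle x\rangle^{-2}$ decay of $Q$, and (iii) removal of the orthogonality constraints by decomposition and Cauchy--Schwarz (with the kinetic cross terms absorbed into a small fraction of the nonnegative localized kinetic form of the orthogonal piece). Your parenthetical restriction to even $\epsilon$ is in fact essential and well spotted: for general $\epsilon_1\in H^{1/2}(\mathbb{R})$ the direction $\epsilon_1=\nabla Q$ satisfies all three orthogonality conditions yet annihilates the unperturbed form, so the inequality as literally stated needs the radial setting (or an extra penalization $(\epsilon_1,\nabla Q)^2$), a caveat inherited from the paper's own formulation.
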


\begin{lemma}\label{lemma-app-c-3}
For any $u\in L^2(\mathbb{R})$, we have the bound
\begin{align}\notag
\left|\int_{s=0}^{+\infty}\sqrt{s}\int\Delta^2\phi_{A}|u_s|^2dxds\right|\lesssim\frac{1}{A}\|u\|_2^2.
\end{align}
\end{lemma}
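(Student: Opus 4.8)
The plan is to pass to the Fourier side and realize the quantity as a bounded bilinear form in $\hat u$, controlled through Schur's test. First I would record the structural facts about the weight. Since $\phi_A(x)=\phi(x/A)$, one has $\Delta^2\phi_A(x)=A^{-4}\psi(x/A)$ with $\psi:=\phi^{(4)}$. From the explicit profile \eqref{energy-cutoff-function} and the evenness of $\phi$, the function $\psi$ is smooth, even, vanishes on $[-1,1]$ and equals $e^{-|x|}$ for $|x|\ge 2$; in particular $\psi\in\mathcal{S}(\mathbb{R})$. The decisive observation is the cancellation
\begin{align*}
\int_{\mathbb{R}}\psi\,dx=\int_{\mathbb{R}}\phi^{(4)}\,dx=\phi'''(+\infty)-\phi'''(-\infty)=0,
\end{align*}
which holds because $\phi'''$ is odd and tends to $0$ at infinity. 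Hence $\hat\psi(0)=0$ and, $\hat\psi$ being smooth, $\hat\psi(\zeta)=\mathcal{O}(|\zeta|)$ near the origin; moreover $\widehat{\Delta^2\phi_A}(\eta)=A^{-3}\hat\psi(A\eta)$.

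Next I would rewrite $\mathcal{I}_A:=\int_0^{\infty}\sqrt{s}\int\Delta^2\phi_A\,|u_s|^2\,dx\,ds$ as a bilinear form. Using Plancherel in $x$ together with $\widehat{u_s}(\xi)=\sqrt{2/\pi}\,\hat u(\xi)/(\xi^2+s)$ and Fubini, and then carrying out the $s$-integration by means of the explicit formula
\begin{align*}
\int_0^{\infty}\frac{\sqrt{s}}{(\xi^2+s)(\xi'^2+s)}\,ds=\frac{\pi}{|\xi|+|\xi'|},
\end{align*}
one is led to
\begin{align*}
\mathcal{I}_A=c\int\!\!\int\widehat{\Delta^2\phi_A}(\xi-\xi')\,\frac{\hat u(\xi)\,\overline{\hat u(\xi')}}{|\xi|+|\xi'|}\,d\xi\,d\xi'=c\,A^{-3}\int\!\!\int\hat\psi\big(A(\xi-\xi')\big)\,\frac{\hat u(\xi)\,\overline{\hat u(\xi')}}{|\xi|+|\xi'|}\,d\xi\,d\xi',
\end{align*}
for a universal constant $c$. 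The convergence of the $s$-integral, hence the legitimacy of Fubini, is guaranteed by the $s^{-3/2}$ decay of the integrand at infinity.

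It then remains to bound the operator with kernel $K(\xi,\xi')=A^{-3}\hat\psi(A(\xi-\xi'))/(|\xi|+|\xi'|)$ on $L^2$, for which I would invoke Schur's test. Using the elementary inequality $|\xi|+|\xi'|\ge|\xi-\xi'|$ and the substitution $\zeta=A(\xi-\xi')$,
\begin{align*}
\int_{\mathbb{R}}|K(\xi,\xi')|\,d\xi'\le A^{-3}\int_{\mathbb{R}}\frac{|\hat\psi(A(\xi-\xi'))|}{|\xi-\xi'|}\,d\xi'=A^{-3}\int_{\mathbb{R}}\frac{|\hat\psi(\zeta)|}{|\zeta|}\,d\zeta,
\end{align*}
which is finite and independent of $\xi$ precisely because $\hat\psi(\zeta)=\mathcal{O}(|\zeta|)$ near $0$ while $\hat\psi$ decays rapidly at infinity. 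The same bound holds for $\int_{\mathbb{R}}|K|\,d\xi$ by the symmetry of the kernel. Schur's lemma therefore gives $|\mathcal{I}_A|\lesssim A^{-3}\|\hat u\|_2^2\lesssim A^{-1}\|u\|_2^2$, which is the claimed estimate (in fact with room to spare).

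The only genuinely delicate point is the vanishing moment $\int\phi^{(4)}=0$: it is exactly this cancellation that renders $|\hat\psi(\zeta)|/|\zeta|$ integrable near $\zeta=0$ and so keeps the Schur bound finite, whereas without it the kernel singularity along the diagonal $\xi=\xi'$ would be non-integrable. Everything else—the explicit $s$-integral, the Plancherel reduction, and the Schur estimate—is routine, and a density argument using that $\mathcal{S}(\mathbb{R})$ is dense in $L^2(\mathbb{R})$ extends the bound to arbitrary $u\in L^2(\mathbb{R})$.
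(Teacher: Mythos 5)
Your proof is correct, and it is essentially the argument this paper defers to via \cite{KLR2013}: Plancherel, the explicit integral $\int_0^\infty\frac{\sqrt{s}\,ds}{(\xi^2+s)(\xi'^2+s)}=\frac{\pi}{|\xi|+|\xi'|}$, and Schur's test applied to the kernel $\widehat{\Delta^2\phi_A}(\xi-\xi')/(|\xi|+|\xi'|)$, with the moment cancellation $\int\phi^{(4)}=0$ (equivalently $\widehat{\phi^{(4)}}(0)=0$) supplying the integrability of the kernel near the diagonal. One caveat about your parenthetical ``room to spare'': the extra factor $A^{-3}$ comes from reading $\phi_A=\phi(\cdot/A)$, whereas the normalization consistent with the virial functional $J_A$ and with the coercivity in Lemma~\ref{lemma-app-c-2} is $\phi_A=A^{2}\phi(\cdot/A)$, so that $\Delta\phi_A=\phi''(\cdot/A)\equiv1$ on $\{|x|\le A\}$ (cf.\ the prefactor $b/(A^{2}\lambda^{3})$ already pulled out in \eqref{refine-energy-estimate}); with that normalization $\widehat{\Delta^2\phi_A}(\eta)=A^{-1}\widehat{\phi^{(4)}}(A\eta)$, and your identical computation yields exactly the stated bound $\frac{1}{A}\|u\|_2^2$, with nothing to spare.
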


\noindent	
\textbf{Acknowledgements}

YL was supported by China Postdoctoral Science Foundation (No. 2021M701365).


\begin{thebibliography}{99}
\bibitem{Banica-2011-CPDE} V. Banica, R. Carles, T. Duyckaerts,  Minimal blow-up solutions to the mass-critical inhomogeneous NLS equation. Comm. Partial Differential Equations 36(3) (2011),  487-531.
\bibitem{BGV2018} J. Bellazzini, V. Georgiev, N. Visciglia, Long time dynamics for semirelativistic NLS and half wave in arbitrary dimension, Math. Ann. 371 (2018), 707-740.
		
		
\bibitem{lenzmann-2016blowup} T. Boulenger, D. Himmelsbach, E. Lenzmann, Blowup for fractional NLS, J.  Funct. Anal. 271 (2016), 2569-2603.
\bibitem{majda2001} D. Cai, A. J. Majda, D. W. McLaughlin,  E. G. Tabak, Dispersive wave turbulence in one dimension, Phys. D 152 (2001), 551-572.
\bibitem{cho2013} Y. Cho, H. Hajaiej, G. Hwang, T. Ozawa, On the Cauchy problem of fractional Sch\"{o}dinger equation with Hartree type nonlinearity, Funkcial. Ekvac. 56(2) (2013),  193-224.
\bibitem{eckhaus1983} W. Eckhaus, P. Schuur, The emergence of solitons of the Korteweg-de Vries equation from arbitrary initial conditions, Math. Methods Appl. Sci. 5(1) (1983),  97-116.
		
\bibitem{Frank-lenzmann2013} R. Frank, E. Lenzmann, Uniqueness of non-linear ground states for fractional Laplacians in $\mathbb{R}$, Acta Math. 210(2) (2013),  261-318.
\bibitem{FrankLS2016} R. L. Frank,  E. Lenzmann, L. Silvestre, Uniqueness of radial solutions for the fractional Laplacian. Comm. Pure Appl. Math. 69 (2016),  1671-1726.

\bibitem{Georgiev-Li-2022-CPDE} V. Georgiev, Y. Li, Nondispersive solutions to the mass critical half-wave equation in two dimensions, Comm. Partial Differential Equations, (2021), 47 (2022), no. 1, 39-88.

\bibitem{Georgiev-Li-2021-JFA} V. Georgiev, Y. Li, Blowup dynamics for mass critical half-wave
equation in 3D, J. Funct. Anal. 281 (2021), 109132.
\bibitem{Gerard-2010} P. G\'erard, S. Grellier, The cubic Szeg\H{o} equation. Ann. Sci. \'Ec. Norm. Sup\'er. (4) 43(5) (2010), 761-810.
\bibitem{Hidano-2019-sel}K. Hidano, C. Wang, Fractional derivatives of composite functions and the Cauchy problem for the
nonlinear half wave equation, Sel. Math. 25 (2019) 2.
\bibitem{Ionescu2014} A. D. Ionescu, F. Pusateri, Nonlinear fractional Schr\"{o}dinger equations in one dimension, J. Funct. Anal. 266(1) (2014),  139-176.

\bibitem{klein2014} C. Klein, C. Sparber,  P. Markowich, Numerical study of fractional nonlinear Schr\"{o}dinger equations, Proc. R. Soc. A 470 (2172) (2014),  20140364.
\bibitem{k-lenzmann2013} K. Kirkpatrick, E. Lenzmann, G. Staffilani, On the continuum limit for discrete NLS with long-range lattice interactions, Comm. Math. Phys. 317(3) (2013),  563-591.
\bibitem{Raphael-2009-cpam}J. Krieger, Y. Martel, P. Rapha\"{e}l, Two-soliton solutions to the three-dimensional gravitational Hartree equation, Commun. Pure Appl. Math. 62 (11) (2009), 1501-1550.
\bibitem{KLR2013} J. Krieger, E. Lenzmann, P. Rapha\"{e}l, Nondispersive solutions to the $L^2$-critical half-wave equation, Arch. Ration. Mech. Anal. 209  (2013), 61-129.

\bibitem{majda1997} A. J. Majda, D. W. McLaughlin,  E. G. Tabak, A one-dimensional model for dispersive wave turbulence, J. Nonlinear Sci. 7(1) (1997), 9-44.
\bibitem{Martel-2005-Amer} Y. Martel,  Asymptotic N-soliton-like solutions of the subcritical and critical generalized Korteweg-de Vries equations. Amer. J. Math. 127(5) (2005), 1103-1140.
\bibitem{Martel-2006-poincare} Y. Martel, F. Merle, Multi solitary waves for nonlinear Schr\"{o}dinger equations. Ann. Inst. H. Poincar\'e Anal. Non Lin\'eaire 23(6) (2006),  849-864.

\bibitem{M1996}F. Merle,  Nonexistence of minimal blow-up solutions of equations $iu_t=-\Delta u-K(x)|u|^{4/N}u $ in $R^N$. Ann. Inst. H. Poincar\'e Phys. Th\'eor. 64 (1996), no. 1, 33-85.
\bibitem{Merle-1993-Duke}F. Merle, Determination of blow-up solutions with minimal mass for nonlinear Schr\"odinger equations with critical power. Duke Math. J. 69(2) (1993), 427-454.
\bibitem{Merle-1990-CMP} F. Merle, Construction of solutions with exactly $k$ blow-up points for the Schr\"{o}dinger equation with critical nonlinearity. Commun. Math. Phys. 129(2) (1990), 223-240.
\bibitem{MerleR2006} F, Merle, P. Rapha\"{e}l,  On a sharp lower bound on the blow-up rate for the $L^2$ critical nonlinear Schr\"{o}dinger equation. J. Amer. Math. Soc. 19 (2006), 37-90.
\bibitem{Raphael-2014-Duke} F. Merle, P. Rapha\"{e}l,  J. Szeftel,  On collapsing ring blow-up solutions to the mass supercritical nonlinear Schr\"{o}dinger equation. Duke Math. J. 163(2) (2014),  369-431.
\bibitem{Rephael-2007-poincare} F. Planchon, P. Rapha\"{e}l,  Existence and stability of the log-log blow-up dynamics for the $L^2$-critical nonlinear Schr\"{o}dinger equation in a domain, Ann. Henri Poincar\'{e} 8(6) (2007),  1177-1219.

\bibitem{Raphael2011-Jams} P. Rapha\"{e}l, J. Szeftel, Existence and uniqueness of minimal blow-up solutions to an inhomogeneous mass critical NLS. J. Amer. Math. Soc. 24 (2) (2011), 471-546.
\bibitem{stein-1993} E. M. Stein, Harmonic analysis: real-variable methods, orthogonality, and oscillatory intergral. Princeton Mathematical series, vol. 43, Princeton University Press 1993.
		
\bibitem{Weinstein1985} M. I. Weinstein, Modulational stability of ground states of nonlinear Schr\"{o}dinger equations. SIAM J. Math. Anal. 16(3)  (1985), 472-491.
\bibitem{Weinstein1987} M. I. Weinstein, Existence and dynamic stability of solitary wave solutions of equations arising in long wave propagation, Comm. Partial Differential Equations 12(10) (1987),  1133-1173.
		
		

		

		

		

		
		
	\end{thebibliography}
\end{document}